\newtheorem{theorem}{Theorem}[section]
\newtheorem{proposition}[theorem]{Proposition}
\newtheorem{lemma}[theorem]{Lemma}
\newtheorem{cor}[theorem]{Corollary}
\theoremstyle{definition}
\newtheorem{definition}[theorem]{Definition}
\numberwithin{equation}{section}
\begin{document}

\title[$p$-Adic hypergeometric functions and certain weight three newforms]
{$p$-Adic hypergeometric functions and certain weight three newforms}


\author[Sulakashna]{Sulakashna (ORCID: 0009-0008-0441-4792)}
\address{Department of Mathematics, Indian Institute of Technology Guwahati, North Guwahati, Guwahati-781039, Assam, INDIA}
\curraddr{}
\email{sulakash@iitg.ac.in}

\author[Rupam Barman]{Rupam Barman (ORCID: 0000-0002-4480-1788)}
\address{Department of Mathematics, Indian Institute of Technology Guwahati, Assam, India, PIN- 781039}
\email{rupam@iitg.ac.in}

\thanks{}


\subjclass[2010]{11F33, 11G25, 11S80, 11T24, 33C20}
\date{28 March 2024, version-1}
\keywords{supercongruences; hypergeometric series; $p$-adic gamma function; elliptic curves.}
\begin{abstract} 
	For an odd prime $p$ and a positive integer $n$, let ${_n}G_n[\cdots]_p$ denote McCarthy's $p$-adic hypergeometric function. In this article, we prove $p$-adic analogue of certain classical hypergeometric identities and using these identities we express the $p$-th Fourier coefficient of certain weight three newforms in terms of special values of ${_3}G_3[\cdots]_p$. Rodriguez-Villegas conjectured certain supercongruences between values of truncated hypergeometric series and the $p$-th Fourier coefficients of these newforms. As a consequence of our main results, we obtain another proof of these supercongruences which were earlier proved by Mortenson and Sun.  
\end{abstract}
\maketitle
\section{Introduction and statement of results}
For a complex number $a$ and a non-negative integer $n$, the rising factorial $(a)_n$ is defined by $(a)_0:=1$ and $(a)_n:=a(a+1)(a+2)\cdots(a+n-1)$ for $n>0$.
Then, for a non-negative integer $r$, and $a_i, b_i\in\mathbb{C}$ with $b_i\notin\{\ldots, -3,-2,-1, 0\}$,
the classical hypergeometric series ${_{r+1}}F_{r}$ is defined by
\begin{equation}\label{hyper}
	{_{r+1}}F_{r}\left[\begin{array}{cccc}
		a_1, & a_2, & \ldots, & a_{r+1} \\
		& b_1, & \ldots, & b_r
	\end{array}| \lambda
	\right]:=\sum_{k=0}^{\infty}\frac{(a_1)_k\cdots (a_{r+1})_k}{(b_1)_k\cdots(b_r)_k}\cdot\frac{\lambda^k}{k!},
\end{equation}
which converges for $|\lambda|<1$. When we truncate the infinite sum \eqref{hyper} at $k=n$, it is known as a truncated hypergeometric series. We use subscript notation
to denote the truncated hypergeometric series
\[
{_{r+1}}F_{r}\left[\begin{array}{cccc}
	a_1, & a_2, & \ldots, & a_{r+1} \\
	& b_1, & \ldots, & b_r
\end{array}| \lambda
\right]_n:=\sum_{k=0}^{n}\frac{(a_1)_k\cdots (a_{r+1})_k}{(b_1)_k\cdots(b_r)_k}\cdot \frac{\lambda^k}{k!}.
\]
\par In \cite{RV}, Rodriguez-Villegas studied the relationship between the number of points over  $\mathbb{F}_p$ on certain Calabi-Yau manifolds and truncated hypergeometric series which corresponds to a particular period of the manifold. In the same article, he examined $18$ supercongruences where he related the truncated hypergeometric series to the Fourier coefficients of modular form of weight three and four. It was Beukers \cite{beuker} who first observed supercongruences of this type in connection with the Ap\'{e}ry numbers used in the proof of the irrationality of $\zeta(3)$. Ahlgren and Ono \cite{AO} proved Beukers' supercongruence conjecture relating Ap\'{e}ry numbers to the coefficients of a certain weight four newform. All the $14$ supercongruences of Rodriguez-Villegas associated with the modular form of weight four are proved, see for example, \cite{Fuselier-McCarthy, kilbourn, long,mccarthy4}. For a nice survey and more conjectural supercongruences, one can also see \cite{mccarthy5}. 
\par Dedekind's eta function $\eta(z)$ is defined by 
\begin{align*}
	\eta(z):=q^{\frac{1}{24}}\prod_{n=1}^{\infty}(1-q^n),
\end{align*}
where $q:=e^{2\pi iz}$ and $\text{Im}(z)>0$.
The integers $a(n)$, $b(n)$, and $c(n)$ are defined by
\begin{align}
	&\sum_{n=1}^{\infty}a(n)q^n:=\eta^6(4z)\in S_3\left(\Gamma_0(16),\left(\frac{-4}{d}\right)\right),\label{modular1}\\
	&\sum_{n=1}^{\infty}b(n)q^n:=\eta^3(6z)\eta^3(2z)\in S_3\left(\Gamma_0(12),\left(\frac{-3}{d}\right)\right),\label{modular2}\\
	&\sum_{n=1}^{\infty}c(n)q^n:=\eta^2(8z)\eta(4z)\eta(2z)\eta^2(z)\in S_3\left(\Gamma_0(8),\left(\frac{-2}{d}\right)\right)\label{modular3}.
\end{align}
These weight three newforms are related to modular $K3$ surfaces. Rodriguez-Villegas \cite{RV} conjectured that for any prime $p>3$ we have
	\begin{align}
		&\label{eq-1}\sum_{n=0}^{p-1}\frac{(2n)!^3}{n!^6}64^{-n}\equiv a(p)\pmod{p^2},\\
		&\label{eq-2}\sum_{n=0}^{p-1}\frac{(3n)!(2n)!}{n!^5}108^{-n}\equiv b(p)\pmod{p^2},\\
		&\label{eq-3}\sum_{n=0}^{p-1}\frac{(4n)!}{n!^4}256^{-n}\equiv c(p)\pmod{p^2},\\
		&\label{eq-4}\sum_{n=0}^{p-1}\frac{(6n)!}{(3n)!n!^3}1728^{-n}\equiv \gamma(p)a(p)\pmod{p^2},
	\end{align}
where $\gamma(p):=-1$ if $p\equiv5\pmod{12}$ and $\gamma(p):=1$ otherwise.
\par Supercongruence \eqref{eq-1} has already been proved by several authors including Ahlgren \cite{ahlgren2}, Ishikawa \cite{Ishikawa}, Mortenson \cite{mortenson}, and Van Hamme \cite{vH}. The supercongruences \eqref{eq-2}-\eqref{eq-4}  were studied by Mortenson in \cite{mortenson}. Using finite field hypergeometric functions, Mortenson proved \eqref{eq-2} for $p\equiv1\pmod 3$, \eqref{eq-3} for $p\equiv1\pmod 4$ and \eqref{eq-4} for $p\equiv1\pmod 6$. When $p\equiv -1\pmod d$, where $d=3,4,6$, Mortenson's approach only allowed him to show the supercongruences up to sign. For example, for $p\equiv -1\pmod 3$, he proved that
\begin{align*}
\left(\sum_{n=0}^{p-1}\frac{(3n)!(2n)!}{n!^5}108^{-n}\right)^2\equiv b(p)^2\pmod{p^2}.
\end{align*}
Sun \cite{Sun} was the first to prove the remaining cases of \eqref{eq-2}-\eqref{eq-4}. He used another approach, namely Schr\"{o}der polynomials and the Zeilberger algorithm to complete the proof of \eqref{eq-2}-\eqref{eq-4}.
\par
In this article, we study the supercongruences \eqref{eq-2}-\eqref{eq-4} via McCarthy's $p$-adic hypergeometric functions involving the $p$-adic Gamma function and extend Mortenson's approach to give a complete proof of \eqref{eq-2}-\eqref{eq-4}. Let $p$ be an odd prime. Let $\mathbb{F}_p$ be the finite field containing $p$ elements. Let $\varphi$ be the quadratic character on $\mathbb{F}_p$. For a positive integer $n$, let $_{n}G_{n}[\cdots]_p$ denote McCarthy's $p$-adic hypergeometric function (see Definition \ref{defin1} in Section \ref{pre}). Firstly, we establish certain transformations and identities for McCarthy's $p$-adic hypergeometric function $_{n}G_{n}[\cdots]_p$. The following transformation for classical hypergeometric series is due to Kummer \cite[p. 4, Eqn (1)]{bailey}.
\begin{align}\label{kummar}
	&{_2}F_1\left[\begin{array}{cc}
		a, &b\\
		&c
	\end{array}|x
	\right]=\frac{\Gamma(c)\Gamma(c-a-b)}{\Gamma(c-a)\Gamma(c-b)}{_2}F_1\left[\begin{array}{cc}
		a, &b \\
		&a+b+1-c
	\end{array}|1-x
	\right]\notag \\
	&\hspace{1cm}+	\frac{\Gamma(c)\Gamma(a+b-c)}{\Gamma(a)\Gamma(b)}(1-x)^{c-a-b}{_2}F_1\left[\begin{array}{cc}
		c-a, &c-b\\
		&1+c-a-b
	\end{array}|1-x
	\right].
\end{align}
The second author with Saikia \cite{BS3} found a $p$-adic analogue of \eqref{kummar} when $a=\frac{1}{4}, b=\frac{3}{4}$, and $c=1$. In the following theorem, we prove a $p$-adic analogue of \eqref{kummar} when $a=\frac{1}{3}$, $b=\frac{2}{3}$, and $c=1$. 
\begin{theorem}\label{MT-1}
 Let $p>3$ be a prime and $t\in\mathbb{F}_p$ such that $t\neq0,1$. We have   
	\begin{align*}
		{_2}G_2\left[\begin{array}{cc}
			\frac{1}{3},& \frac{2}{3} \vspace{.12cm}\\
			0, & 0
		\end{array}|\frac{1}{t} \right]_p = \varphi(-3)\cdot{_2}G_2\left[\begin{array}{cc}
				\frac{1}{3},& \frac{2}{3} \vspace{.12cm}\\
				0, & 0
		\end{array}|\frac{1}{1-t}
		\right]_p.
	\end{align*} 
\end{theorem}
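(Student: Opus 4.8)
The plan is to pass from McCarthy's $p$-adic function to Gauss sums and then carry out the transformation at the level of character sums. First I would write out the definition of ${_2}G_2\left[\begin{smallmatrix}1/3 & 2/3\\ 0 & 0\end{smallmatrix}\mid s\right]_p$ as a sum over $0\le j\le p-2$ of $\overline{\omega}^{\,j}(s)$ weighted by the product $\Gamma_p(\langle \tfrac13-\tfrac{j}{p-1}\rangle)\Gamma_p(\langle \tfrac23-\tfrac{j}{p-1}\rangle)\Gamma_p(\tfrac{j}{p-1})^2\big/\big(\Gamma_p(\tfrac13)\Gamma_p(\tfrac23)\big)$ together with the power of $-p$ coming from the floor terms; the lower parameters $b_1=b_2=0$ simplify things since $\Gamma_p(0)=1$ and $\langle -b_i+\tfrac{j}{p-1}\rangle=\tfrac{j}{p-1}$. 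Applying the Gross--Koblitz formula converts each $\Gamma_p$ factor into a Gauss sum, so that ${_2}G_2[\,\cdots\mid 1/t\,]_p$ becomes a fixed multiple of a Gauss-sum (equivalently, finite-field) hypergeometric expression in which $s=1/t$ enters only through the characters $\overline{\omega}^{\,j}(t)$.

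Next I would open the Gauss sums and carry out the sum over $j$, which collapses to a Jacobi-sum relation and rewrites ${_2}G_2[\,\cdots\mid 1/t\,]_p$ as a single character sum over $x\in\mathbb{F}_p$ whose summand depends on $t$ through factors such as $\varphi(\cdots)$ and characters of $x$ and $1-tx$. The crucial point is that this reduction is uniform in $p$: for $p\equiv1\pmod3$ genuine cubic characters appear, while for $p\equiv2\pmod3$ McCarthy's $\Gamma_p$-definition supplies the correct substitute, and it is precisely this uniformity that removes the sign ambiguity in Mortenson's approach.

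The transformation itself is then the finite-field analogue of Kummer's relation \eqref{kummar} in the degenerate regime $a+b=c$ (here $\tfrac13+\tfrac23=1$), in which the classical right-hand side is ill-defined through the pole of $\Gamma(c-a-b)=\Gamma(0)$ but the $p$-adic statement stays finite. I would realize the passage $1/t\mapsto 1/(1-t)$ as a change of variable in the character sum that sends the parameter $t$ to $1-t$ up to a quadratic twist by $-3$; the twist contributes the factor $\varphi(-3)$, and feeding the transformed sum back through the same dictionary produces the right-hand side of the theorem. The constant itself reduces to a Gauss/Jacobi-sum evaluation, ultimately to $g(\varphi)^2=\varphi(-1)p$ together with a Hasse--Davenport relation, which collapses to $\left(\frac{-3}{p}\right)=\varphi(-3)$.

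The main obstacle I anticipate is twofold. First, the bookkeeping of the floor-function exponents of $-p$ and of the fractional parts $\langle \tfrac13-\tfrac{j}{p-1}\rangle$, $\langle\tfrac23-\tfrac{j}{p-1}\rangle$ as $j$ crosses the thresholds where these floors jump; matching these against the $\pi$-powers produced by Gross--Koblitz must be done carefully so that no spurious power of $p$ survives. Second, and more seriously, is pinning down the sign for $p\equiv2\pmod3$, where there is no cubic character and one cannot simply borrow the split finite-field transformation. For this case I would argue directly with the reflection formula $\Gamma_p(x)\Gamma_p(1-x)=(-1)^{\,R(x)}$ and the Gauss--Legendre multiplication formula for $\Gamma_p$ at the arguments $\tfrac13,\tfrac23$, whose combination is exactly what manufactures the sign $\varphi(-3)=-1$; checking that these $p$-adic Gamma identities reproduce the same twist as in the split case is the technical heart of the argument.
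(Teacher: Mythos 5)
Your reduction of ${_2}G_2[\cdots]_p$ to Gauss sums via Gross--Koblitz is fine, and it is essentially how the paper also begins; but the heart of your argument --- realizing the passage $1/t\mapsto 1/(1-t)$ as ``a change of variable in the character sum \dots up to a quadratic twist by $-3$'' --- is unsubstantiated and, as stated, not correct. The uniform-in-$p$ object your reduction produces is the trace of Frobenius: after Lennon's formula \cite{lennon} and Gross--Koblitz one gets ${_2}G_2[\cdots\mid 1/t]_p=a_p(E_{3,t})$ for the curve $E_{3,t}\colon y^2+3xy+ty=x^3$, i.e.\ a quadratic character sum $-\sum_x\varphi(f_t(x))$ with $f_t$ cubic. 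A quadratic twist is indeed just a substitution in such a sum and does contribute $\varphi(-3)$, but $E_{3,t}$ and $E_{3,1-t}$ are in general not twists of each other (their $j$-invariants differ), so no substitution relates the two sums. The actual relation, which is the content of the paper's Proposition \ref{prop-1}, is that $E_{3,t}$ carries the rational point $(0,0)$ of order $3$, giving a $3$-isogeny onto an explicit curve $E_t'$ (via \cite[Theorem 12.16]{Washington}), and $E_t'$ is then checked to be the $(-3)$-quadratic twist of $E_{3,1-t}$. The isogeny step is where the identity really lives; isogeny-invariance of $a_p$ is a theorem about point counts, not a change of variables, and nothing in your sketch supplies it. (For $p\equiv 1\pmod 3$ one could instead quote Greene's finite-field Kummer transformation \cite[Theorem 4.4(i)]{greene}, but that identity is itself proved by nontrivial Gauss-sum manipulations, again not by substitution.)

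The second gap is the case $p\equiv 2\pmod 3$, which you rightly call the technical heart but then dispatch with the reflection formula $\Gamma_p(x)\Gamma_p(1-x)=\pm1$ and the multiplication formula at $\frac13,\frac23$. These are identities between individual $\Gamma_p$-values; they can manufacture signs, but they contain no mechanism for moving the argument of the $G$-function from $1/t$ to $1/(1-t)$, which is what the theorem asserts. This is precisely the point where Mortenson's finite-field method lost the sign, and it is what the paper's geometric route is designed to overcome: Lennon's formula and Proposition \ref{prop-1} hold for all $p>3$ uniformly, so no case split on $p\bmod 3$ (and no appeal to cubic characters) is needed at all. To complete your proof you would have to either import that elliptic-curve input, or exhibit a genuine $\Gamma_p$-level transformation identity valid when no cubic character exists --- your outline provides neither.
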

The following is a $p$-adic analogue of \eqref{kummar} when $a=\frac{1}{6}$, $b=\frac{5}{6}$, and $c=1$.
\begin{theorem}\label{MT-2}
	Let $p>3$ be a prime and $t\in\mathbb{F}_p$ such that $t\neq0,1$. We have   
	\begin{align*}
		{_2}G_2\left[\begin{array}{cc}
			\frac{1}{6}, & \frac{5}{6} \vspace{.12cm}\\
			0,& 0
		\end{array}|\frac{1}{t} \right]_p = \varphi(-1)\cdot{_2}G_2\left[\begin{array}{cc}
			\frac{1}{6}, & \frac{5}{6} \vspace{.12cm}\\
			0, & 0
		\end{array}|\frac{1}{1-t}
		\right]_p.
	\end{align*} 
\end{theorem}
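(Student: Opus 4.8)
The plan is to work directly from the definition of $_2G_2[\cdots]_p$ rather than to transcribe the classical identity \eqref{kummar} verbatim. For $a=\frac16$, $b=\frac56$, $c=1$ one has $c-a-b=0$, so both Gamma prefactors in \eqref{kummar} involve $\Gamma(0)$ and the classical formula degenerates into a logarithmic form; it therefore cannot simply be copied. The $p$-adic side has no such obstruction, since the entries $b_1=b_2=0$ only produce factors $\Gamma_p(0)=1$ and the defining sum stays finite. The real content of Theorem \ref{MT-2} is thus a clean multiplicative relation that must be established at the level of Gauss sums.

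First I would expand both sides using the definition of $_2G_2[\cdots]_p$. Writing the argument as $\frac1t$ and then as $\frac1{1-t}$, each side is a sum over $j$ of $\overline\omega^{\,j}(\cdot)$ (with $\omega$ the Teichm\"uller character) times a product of ratios of $p$-adic Gamma values at $\langle\frac16-\frac{j}{p-1}\rangle$ and $\langle\frac56-\frac{j}{p-1}\rangle$, the $b_i=0$ entries contributing only $\Gamma_p(\langle\frac{j}{p-1}\rangle)$ and powers of $-p$. Since the two sides carry identical $\Gamma_p$-coefficients and differ only in the character argument, $\overline\omega^{\,j}(t)$ versus $\overline\omega^{\,j}(1-t)$, no reindexing of $j$ can relate them; a genuine Gauss-sum identity is required. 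Accordingly I would apply the Gross--Koblitz formula to rewrite each $\Gamma_p$-ratio as a Gauss sum, turning $_2G_2[\frac16,\frac56;0,0|x]_p$ into a character sum over $\mathbb{F}_p$, equivalently a finite-field hypergeometric sum whose numerator characters have order $6$.

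The organizing idea is the factorization $\frac16=\frac12-\frac13$ and $\frac56=\frac12+\frac13$, which says the two order-$6$ numerator characters are of the form $\varphi\psi$ and $\varphi\overline\psi$ with $\varphi$ quadratic and $\psi$ cubic. I therefore expect the argument to run parallel to the proof of Theorem \ref{MT-1}, with the cubic character there replaced throughout by these sextic characters, and with the transformation $x\mapsto$ complement realized by the same change of variable in the underlying character sum. The one genuinely new ingredient is the quadratic part: where Theorem \ref{MT-1} produced the factor $\varphi(-3)$, the extra quadratic Gauss sum $g(\varphi)^2=\varphi(-1)\,p$, combined with the Hasse--Davenport product relation linking the sextic Gauss sums to the cubic ones, should convert that constant into $\varphi(-1)$.

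The hard part will be the bookkeeping rather than any single conceptual step. I must match the floor-function exponents of $-p$ on the two sides, correctly handle the degenerate contributions from the entries $b_1=b_2=0$ (where $\Gamma_p(0)=1$ plays the role that the divergent $\Gamma(0)$ plays classically), and apply the Hasse--Davenport relation with exactly the right characters so that precisely the factor $\varphi(-1)$, with no stray power of $p$ and no spurious sign, survives. Pinning down this constant is the crux; once it is isolated, the rest of the identity should fall out of the finite-field change of variables that mirrors \eqref{kummar}.
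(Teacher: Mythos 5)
Your reduction to finite-field character sums cannot reach all the primes covered by the theorem, and the case it misses is exactly the hard one. The Gross--Koblitz formula (Theorem \ref{thm2_3}) converts $\Gamma_p$-values into Gauss sums only at arguments of the form $\left\langle \frac{m}{p-1}\right\rangle$ with $m\in\mathbb{Z}$; the arguments occurring in ${_2}G_2\left[\frac16,\frac56;0,0\,|\,x\right]_p$ are $\left\langle\frac16-\frac{a}{p-1}\right\rangle$ and $\left\langle\frac56-\frac{a}{p-1}\right\rangle$, which lie in $\frac{1}{p-1}\mathbb{Z}$ only when $6\mid p-1$. Equivalently, characters of order $6$ (and the cubic character $\psi$ in your factorization $\frac16=\frac12-\frac13$, $\frac56=\frac12+\frac13$) exist on $\mathbb{F}_p^{\times}$ only when $p\equiv1\pmod 6$. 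So the ``finite-field hypergeometric sum whose numerator characters have order $6$'' simply does not exist when $p\equiv5\pmod 6$; that is precisely the regime in which the finite-field methods of Greene \cite{greene} and Mortenson \cite{mortenson} break down, and which the $p$-adic function ${_n}G_n$ is designed to reach. Moreover, even in the favorable case $p\equiv1\pmod 6$, the step carrying all the content --- the ``finite-field change of variables that mirrors \eqref{kummar}'' --- is asserted rather than proved: the finite-field analogue of Kummer's transformation is a genuine theorem (\cite[Theorem 4.4(i)]{greene}), not a substitution, and the Hasse--Davenport bookkeeping you defer is exactly where the claimed constant $\varphi(-1)$ would have to be extracted.

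The paper's proof avoids multiplicative characters altogether and works uniformly in $p$. Using \cite[Theorem 3.4]{BS1}, it writes $a_p(E_t)=p\,\varphi(t)\cdot{_2}G_2\left[\frac12,\frac12;\frac13,\frac23\,|\,t\right]_p$ for the explicit family $E_t:y^2=x^3-3x^2+4t$, and then performs the index shift $l=\frac{p-1}{2}-k$ inside the defining sum --- this is where your observation $\frac16=\frac12-\frac13$, $\frac56=\frac12+\frac13$ really enters, as a shift by the integer $\frac{p-1}{2}$ rather than as a factorization of characters --- which, together with Lemma \ref{lemma-3_1}, yields $a_p(E_t)=\varphi(-3)\cdot{_2}G_2\left[\frac16,\frac56;0,0\,|\,\frac1t\right]_p$ for every $p>3$. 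After the substitution $x\mapsto x+1$, the curve $E_{1-t}$ is visibly the $(-1)$-quadratic twist of $E_t$, so $a_p(E_t)=\varphi(-1)\,a_p(E_{1-t})$ by \eqref{twist}, and the theorem follows with no case split; the geometric twisting relation is what supplies the factor $\varphi(-1)$ that you hoped to obtain from $g(\varphi)^2$. Any repair of your approach needs a substitute for Gauss sums that is valid when $6\nmid p-1$, and in this paper that role is played by traces of Frobenius of elliptic curves.
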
 
We prove another identity for McCarthy's $p$-adic hypergeometric functions. We recall Bailey's cubic transformation \cite[Eqn (4.06)]{bailey2}:
\begin{align}\label{baily}
&{_3}F_2\left[\begin{array}{ccc}
	a,&2b-a-1, & a+2-2b \vspace{.12cm}\\
	& b, & a-b+\frac{3}{2}
\end{array}|4x
\right]\notag\\
&=
(1-x)^{-a}\cdot	{_3}F_2\left[\begin{array}{ccc}
		\frac{a}{3},& \frac{a+1}{3}, &\frac{a+2}{3} \vspace{.12cm}\\
	&b,	& a-b+\frac{3}{2} 
	\end{array}|\frac{27x^2}{4(1-x)^3} \right].
\end{align}
In the following theorem we prove a $p$-adic analogue of \eqref{baily} when $a=\frac{1}{2}$ and $b=1$. Let $\delta$ denote the function on $\mathbb{F}_p$ defined by
\begin{align*}
\delta(x):=\left\{
\begin{array}{ll}
1, & \hbox{if $x=0$;} \\
0, & \hbox{otherwise.}
\end{array}
\right.
\end{align*}
\begin{theorem}\label{MT-3}
	Let $p>3$ be a prime and $x\in\mathbb{F}_p$ such that $x\neq0,1$. Then we have   
	\begin{align*}
	p^2\cdot {_3}F_2\left(\begin{array}{ccc}
		\varphi, & \varphi, &  \varphi\vspace*{0.1cm}\\
		& \varepsilon, & \varepsilon
	\end{array}|4x
	\right)_p &=	{_3}G_3\left[\begin{array}{ccc}
			\frac{1}{2}, & \frac{1}{2}, &\frac{1}{2} \vspace{.12cm}\\
			0, & 0, & 0
		\end{array}|\frac{1}{4x} \right]_p \\
	&= \varphi(1-x)\cdot{_3}G_3\left[\begin{array}{ccc}
			\frac{1}{2}, &\frac{1}{6}, & \frac{5}{6} \vspace{.12cm}\\
			0, & 0, & 0
		\end{array}|\frac{-4(x-1)^3}{27x^2}
		\right]_p \\
		&\hspace{.5cm}+ \delta(x+2)\cdot\varphi(-1)\cdot p.
	\end{align*} 
\end{theorem}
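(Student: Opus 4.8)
The plan is to establish the two equalities separately. The first equality, identifying the normalized finite-field hypergeometric function $p^2\cdot{_3}F_2(\varphi,\varphi,\varphi;\varepsilon,\varepsilon\mid 4x)_p$ with ${_3}G_3[\tfrac12,\tfrac12,\tfrac12;0,0,0\mid\tfrac{1}{4x}]_p$, is the standard dictionary between McCarthy's $p$-adic hypergeometric function and the finite-field hypergeometric function. Since the upper parameters $\tfrac12,\tfrac12,\tfrac12$ correspond to the quadratic character $\varphi$ taken three times and the lower parameters $0,0,0$ to the trivial character $\varepsilon$, I would expand the defining character sum of ${_3}G_3$ over the index $j$, apply the Gross--Koblitz formula to rewrite each quotient of $p$-adic Gamma values as a product of Gauss sums, and recognize the resulting expression as McCarthy's normalized ${_3}F_2$ evaluated at $4x$. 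The reciprocation of the argument ($\tfrac{1}{4x}\leftrightarrow 4x$) comes from the character $\overline{\omega}^{\,j}$ occurring in the definition of ${_n}G_n$.

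The substance of the theorem is the second equality, which is the $p$-adic incarnation of Bailey's cubic transformation \eqref{baily} at $a=\tfrac12$, $b=1$. The decisive observation is that the upper parameters $\tfrac16,\tfrac12,\tfrac56$ on the right are precisely the triplication $\tfrac{a}{3},\tfrac{a+1}{3},\tfrac{a+2}{3}$ of $a=\tfrac12$. I would therefore expand the definition of ${_3}G_3[\tfrac12,\tfrac16,\tfrac56;0,0,0\mid\cdot]_p$, convert the $p$-adic Gamma quotients to Gauss sums by Gross--Koblitz, and invoke the $p$-adic Gauss (triplication) multiplication formula to collapse the three factors $\Gamma_p(\langle\tfrac16-\tfrac{j}{p-1}\rangle),\ \Gamma_p(\langle\tfrac12-\tfrac{j}{p-1}\rangle),\ \Gamma_p(\langle\tfrac56-\tfrac{j}{p-1}\rangle)$ into a single factor at argument $\tfrac12$ times explicit powers of $(-p)$. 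After this collapse the summand matches that of ${_3}G_3[\tfrac12,\tfrac12,\tfrac12;0,0,0\mid\cdot]_p$, and the cubic change of variable hidden in $\tfrac{-4(x-1)^3}{27x^2}$ is realized at the level of the summation index $j$; the prefactor $\varphi(1-x)$ is the Gross--Koblitz evaluation of the classical factor $(1-x)^{-a}=(1-x)^{-1/2}$.

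The correction term is a genuine boundary phenomenon. The argument $\tfrac{-4(x-1)^3}{27x^2}$ equals $1$ exactly when $x=-2$, which is the degenerate value $\lambda=1$ for the hypergeometric function; at this value the character-sum manipulation picks up one extra term that does not cancel. I would isolate the contribution of $x=-2$, evaluate the leftover Gauss-sum expression, and verify that it contributes precisely $\varphi(-1)\cdot p$, which is recorded by the summand $\delta(x+2)\cdot\varphi(-1)\cdot p$. An advantage of working through $\Gamma_p$ rather than through Gauss sums of fixed order is that the triplication step is available for every prime $p>3$, not only for $p\equiv 1\pmod 6$, which is exactly what allows the complete treatment promised in the introduction.

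The main obstacle is the bookkeeping inside the triplication step: the Gauss multiplication formula for $\Gamma_p$ produces floor-function exponents of the type $\lfloor\langle a_i-\tfrac{j}{p-1}\rangle\rfloor$ together with compensating powers of $(-p)$, and these must be matched uniformly across all residues $j\bmod(p-1)$ while confirming that the reindexing corresponding to the cubic substitution preserves the full range of summation. A secondary difficulty is rigorously separating the $\lambda=1$ boundary contribution so that it is neither double-counted in the main term nor lost; handling this degenerate locus correctly is what yields the exact constant $\varphi(-1)\,p$ rather than an unspecified error.
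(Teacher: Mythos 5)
Your sketch of the first equality is fine in outline: it is the standard dictionary between McCarthy's function and the finite-field hypergeometric function, and the paper obtains it the same way (via \cite[Eqn (3-9)]{BS3} together with the normalization $J(\varphi,\varepsilon)^2=1$). The genuine gap is in your treatment of the second equality. Applying the triplication formula (Lemma \ref{lemma-3_1} with $t=3$ and $t=6$) to the summand of ${_3}G_3\bigl[\tfrac12,\tfrac16,\tfrac56;0,0,0\mid\lambda\bigr]_p$ does \emph{not} produce the summand of ${_3}G_3\bigl[\tfrac12,\tfrac12,\tfrac12;0,0,0\mid\tfrac{1}{4x}\bigr]_p$: the collapse of $\Gamma_p(\langle\tfrac16-\tfrac{j}{p-1}\rangle)\Gamma_p(\langle\tfrac12-\tfrac{j}{p-1}\rangle)\Gamma_p(\langle\tfrac56-\tfrac{j}{p-1}\rangle)$ yields a single factor with \emph{tripled} index, $\Gamma_p(\langle\tfrac12-\tfrac{3j}{p-1}\rangle)$, which in Gauss-sum language is $g(\varphi\chi^3)$ with $\chi=\omega^j$, whereas the target summand carries the cube $g(\varphi\chi)^3$. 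No change of summation variable identifies the two sums: $j\mapsto 3j$ is not a bijection of $\mathbb{Z}/(p-1)\mathbb{Z}$ when $p\equiv1\pmod 3$, and in any case reindexing can only replace $\overline{\omega}^j(\lambda)$ by $\overline{\omega}^j(\lambda^3)$; it cannot turn the argument $\tfrac{-4(x-1)^3}{27x^2}$ into $\tfrac{1}{4x}$, since these are related by a degree-three rational map in $x$, not by $\lambda\mapsto\lambda^3$. The identity between the sum built from $g(\varphi\chi^3)g(\overline{\chi})^3$ at $\chi\bigl(\tfrac{x^2}{4(x-1)^3}\bigr)$ and the sum built from $g(\varphi\chi)^3g(\overline{\chi})^3$ at $\chi(4x)$ --- including the boundary term $\delta(x+2)\varphi(-1)p$ --- \emph{is} the finite-field analogue of Bailey's cubic transformation; it is a theorem in its own right, and your proposal supplies no mechanism for proving it.

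That missing identity is precisely the external input in the paper's proof: one starts from $A_x={_3}\mathbb{F}_2[\varphi,\varphi,\varphi;\varepsilon,\varepsilon\mid 4x]$ and invokes the cubic transformation of Fuselier et al.\ \cite[Theorem 9.14]{FL}, with Lemma \ref{lemma2_1} producing the term $\delta(x+2)\varphi(-1)p$, and only then performs the manipulations you describe --- Davenport--Hasse (Theorem \ref{thm2_2}) with $m=2$, Gross--Koblitz, and Lemmas \ref{lemma-3_1} and \ref{lemma-3_3} --- to translate the resulting Gauss-sum expression into $\varphi(1-x)\cdot{_3}G_3\bigl[\tfrac12,\tfrac16,\tfrac56;0,0,0\mid\tfrac{-4(x-1)^3}{27x^2}\bigr]_p$. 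So the $\Gamma_p$-bookkeeping you planned is indeed present in the proof, but it is the easy half; to close the gap you must either quote \cite[Theorem 9.14]{FL} as the paper does, or reprove that finite-field transformation from scratch, which is a substantial character-sum computation and not a consequence of the multiplication formula. Relatedly, the uniformity in $p>3$ that you attribute to the triplication step is in fact inherited from FL's theorem, which holds without congruence conditions on $p$, combined with the fact that the parameters $\tfrac16,\tfrac56$ make $p$-adic sense whenever $p>3$.
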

Next, using the transformations listed in Theorems \ref{MT-1} and \ref{MT-3}, we express the $p$-th Fourier coefficients of the modular forms defined in \eqref{modular1}-\eqref{modular3} in terms of special values of $_{3}G_{3}[\cdots]_p$. 
\begin{theorem}\label{MT-4}
	Let $p$ be an odd prime. 
	 Then we have
	\begin{align*}
		{_3}G_3\left[\begin{array}{ccc}
			\frac{1}{2}, &\frac{1}{4}, & \frac{3}{4} \vspace{.12cm}\\
			0,& 0, & 0
		\end{array}|1
		\right]_p=c(p).
	\end{align*}
\end{theorem}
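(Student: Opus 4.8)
The plan is to expand the left-hand side through McCarthy's definition (Definition \ref{defin1}) of ${_3}G_3[\cdots]_p$ in terms of the $p$-adic Gamma function $\Gamma_p$, convert the resulting products of $\Gamma_p$-values into Gauss and Jacobi sums by the Gross--Koblitz formula, and match the outcome with the $p$-th coefficient $c(p)$ of the newform in \eqref{modular3}. The decisive structural fact is that $\eta^2(8z)\eta(4z)\eta(2z)\eta^2(z)$ is a CM form for $\mathbb{Q}(\sqrt{-2})$: writing $p=a^2+2b^2$ when $p\equiv1,3\pmod 8$ and $\pi=a+b\sqrt{-2}$, the attached Hecke character gives $c(p)=\pi^2+\overline{\pi}^2=4a^2-2p$, while $c(p)=0$ when $p\equiv5,7\pmod 8$. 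Since the quartic characters implicit in the parameters $\frac{1}{4},\frac{3}{4}$ live on $\mathbb{F}_p$ only when $p\equiv1\pmod 4$, I would organize the proof around the residue of $p$ modulo $4$ (and then $8$), disposing of the small prime $p=3$ and any degenerate cases by direct computation.

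For $p\equiv1\pmod 4$, let $\chi_4$ be a character of order $4$ on $\mathbb{F}_p^\times$. Mimicking the first equality of Theorem \ref{MT-3}, but with $\chi_4,\varphi,\chi_4^3$ in place of $\varphi,\varphi,\varphi$, I would establish the bridge
\[
{_3}G_3\left[\begin{array}{ccc}\frac{1}{2}, & \frac{1}{4}, & \frac{3}{4}\\ 0, & 0, & 0\end{array}\bigg|\,1\right]_p = p^2\cdot {_3}F_2\left(\begin{array}{ccc}\chi_4, & \varphi, & \chi_4^3\\ & \varepsilon, & \varepsilon\end{array}\bigg|\,1\right)_p,
\]
which reduces the claim for these primes to a finite field ${_3}F_2$ evaluated at $1$. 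This is exactly the quantity Mortenson \cite{mortenson} identified with $c(p)$ (via \eqref{eq-3}) for $p\equiv1\pmod 4$; unwinding the finite field ${_3}F_2$ into Jacobi sums and invoking the CM relation that collapses the quartic Jacobi sums to a value in $\mathbb{Z}[\sqrt{-2}]$ yields $\pi^2+\overline{\pi}^2$, matching $c(p)$.

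The main obstacle is the case $p\equiv3\pmod 4$, where no character of order $4$ exists on $\mathbb{F}_p$, the finite field function above is undefined, and Mortenson's method pins down $c(p)$ only up to sign. Here the $p$-adic formulation is essential: ${_3}G_3[\cdots]_p$ is defined uniformly through $\Gamma_p$, so I would work directly with its $\Gamma_p$-expansion and apply the reflection and Gauss multiplication formulas for $\Gamma_p$, passing through $\mathbb{F}_{p^2}$—where the quartic character does exist—by Gross--Koblitz and then descending to $\mathbb{F}_p$. To put the argument $1$ into a shape on which these functional equations act cleanly, I would use the transformation identities established above, namely the $a=\frac{1}{4},\,b=\frac{3}{4}$ Kummer analogue of \cite{BS3} together with Theorems \ref{MT-1} and \ref{MT-3}, to relate the value at $1$ to a point symmetric under the relevant involutions. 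The goal in this case is to show that the left-hand side vanishes for $p\equiv7\pmod 8$ and equals $4a^2-2p$ for $p\equiv3\pmod 8$.

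The step I expect to be hardest is fixing the sign when $p\equiv3\pmod 8$: at the level of Jacobi or Gauss sums both $c(p)$ and the $\Gamma_p$-product are a priori determined only up to a square-root ambiguity, and it is precisely the explicit Gross--Koblitz evaluation of $\Gamma_p$ at $\frac{1}{4}$ and $\frac{3}{4}$ over $\mathbb{F}_{p^2}$—information not visible over $\mathbb{F}_p$—that removes it. This is also the point at which the present approach surpasses the finite field one and recovers the cases left open in \cite{mortenson}, completing the proof of \eqref{eq-3} in the spirit of \cite{Sun}. Once the classes $p\equiv1\pmod 4$ and $p\equiv3\pmod 4$ are reconciled with the CM values of $c(p)$, with $p\equiv5,7\pmod 8$ giving the common value $0$ on both sides, the identity of Theorem \ref{MT-4} follows for every odd prime $p$.
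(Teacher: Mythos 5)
Your first half (the case $p\equiv1\pmod 4$) is sound in outline: the bridge ${_3}G_3\bigl[\tfrac12,\tfrac14,\tfrac34\,|\,1\bigr]_p=p^2\cdot{_3}F_2(\varphi,\chi_4,\chi_4^3;\varepsilon,\varepsilon\,|\,1)_p$ is exactly the analogue of McCarthy's Proposition 2.2 that the paper itself uses in the proof of Theorem \ref{MT-5} for $p\equiv1\pmod 3$, and combining it with Mortenson/Ono does give $c(p)$ there. But the case $p\equiv3\pmod 4$ --- which is the entire point of the theorem, since it is precisely the case Mortenson could settle only up to sign --- is not actually proved in your proposal. ``Apply the reflection and multiplication formulas for $\Gamma_p$, pass through $\mathbb{F}_{p^2}$ where the quartic character exists, and descend'' is a program, not an argument: you state no identity that performs the descent, no mechanism by which the Gross--Koblitz evaluation over $\mathbb{F}_{p^2}$ produces the sign, and the auxiliary transformations you invoke (the $a=\tfrac14,b=\tfrac34$ Kummer analogue of \cite{BS3}, Theorem \ref{MT-1}) act on ${_2}G_2$'s at arguments $\tfrac1t$ and $\tfrac1{1-t}$, with no indicated route from them to the ${_3}G_3$ value at $1$. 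As written, the hard case is a gap.

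The paper closes this gap by a maneuver that sidesteps quartic characters entirely, and it is worth internalizing. It applies Theorem \ref{thrm-1} (the Fuselier--McCarthy $p$-adic transformation, valid for \emph{every} odd prime) at $x=-1$, where the term $\delta(x+1)\varphi(-1)p$ is picked up, to get
\begin{align*}
{_3}G_3\left[\begin{array}{ccc}\frac{1}{2}, &\frac{1}{4}, & \frac{3}{4}\vspace{.12cm}\\ 0,& 0, & 0\end{array}\Big|\,1\right]_p
=\varphi(2)\cdot{_3}G_3\left[\begin{array}{ccc}\frac{1}{2}, &\frac{1}{2}, & \frac{1}{2}\vspace{.12cm}\\ 0,& 0, & 0\end{array}\Big|-1\right]_p-p\cdot\varphi(-2),
\end{align*}
and the right-hand ${_3}G_3$ equals $p^2\cdot{_3}F_2(\varphi,\varphi,\varphi;\varepsilon,\varepsilon\,|-1)_p$, a finite field function built only from the quadratic character, hence defined for all odd $p$. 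Ono's evaluation of that ${_3}F_2$ (in terms of $p=x^2+2y^2$) together with the Stienstra--Beukers formula for $c(p)$ then gives the identity uniformly; the sign you feared would require a square-root disambiguation is carried automatically by $\varphi(-2)$ and $\varphi(2)$ in the formulas above, with no case split between $p\equiv1$ and $p\equiv3\pmod 4$ ever entering the transformations. If you want to salvage your own plan, the missing ingredient is precisely such a uniform $p$-adic transformation replacing the parameters $\tfrac14,\tfrac34$ by $\tfrac12$'s; proving it from scratch over $\mathbb{F}_{p^2}$ would amount to reproving Theorem \ref{thrm-1}.
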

\begin{theorem}\label{MT-5}
	Let $p>3$ be a prime . Then we have
	\begin{align*}
		{_3}G_3\left[\begin{array}{ccc}
			\frac{1}{2},&\frac{1}{3}, & \frac{2}{3} \vspace{.12cm}\\
			0, & 0, & 0
		\end{array}|1
		\right]_p=b(p).
	\end{align*}
\end{theorem}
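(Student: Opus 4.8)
The plan is to evaluate the left-hand side by unfolding McCarthy's function at $\lambda=1$ into Gauss and Jacobi sums via the Gross--Koblitz formula, and then to match the outcome with the complex-multiplication description of the newform in \eqref{modular2}. Since $\eta^3(6z)\eta^3(2z)$ has CM by $\mathbb{Q}(\sqrt{-3})$, its coefficient $b(p)$ vanishes when $p$ is inert, i.e. $p\equiv 2\pmod 3$, and equals $\psi(\mathfrak{p})+\psi(\overline{\mathfrak{p}})$ when $p=\mathfrak{p}\,\overline{\mathfrak{p}}$ splits, i.e. $p\equiv 1\pmod 3$, for the associated weight-three Gr\"ossencharacter $\psi$; in the split case this can be packaged as a Jacobi sum in the quadratic character $\varphi$ and a cubic character of $\mathbb{F}_p$. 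Because a cubic character exists on $\mathbb{F}_p$ exactly when $p\equiv 1\pmod 3$, I would split the argument along these two congruence classes: the first is where the $p$-adic function reduces to a genuine finite-field hypergeometric sum, and the second is where the $p$-adic Gamma function must be used intrinsically.

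For $p\equiv 1\pmod 3$, I would fix a cubic character and use the Gross--Koblitz formula to rewrite the product of $p$-adic Gamma values defining ${}_3G_3\bigl[\tfrac12,\tfrac13,\tfrac23;0,0,0\mid 1\bigr]_p$ as Gauss sums, identifying this value with a finite-field ${}_3F_2$ at $1$. Here the quadratic parameter $\tfrac12$ contributes a quadratic Gauss-sum factor, reducing the cubic part to a ${}_2G_2\bigl[\tfrac13,\tfrac23;0,0\mid\cdot\bigr]_p$-type sum to which the $p$-adic Kummer transformation of Theorem \ref{MT-1} applies. That transformation relates the value at $1$ to one amenable to a Gauss-type summation; combined with the dictionary between ${}_3G_3$ and the point-count function ${}_3F_2(\varphi,\varphi,\varphi;\varepsilon,\varepsilon\mid\cdot)_p$ of the type furnished by Theorem \ref{MT-3}, it collapses the triple Gauss-sum product to a single Jacobi sum. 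Matching with the CM expression above then gives ${}_3G_3\bigl[\tfrac12,\tfrac13,\tfrac23;0,0,0\mid 1\bigr]_p=b(p)$ in the split case.

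For $p\equiv 2\pmod 3$ no cubic character is available, and this is precisely the case where Mortenson's finite-field method pinned down $b(p)$ only up to sign. Here I would argue directly with the $p$-adic Gamma function, the crucial input being the sign factor $\varphi(-3)=-1$ appearing in Theorem \ref{MT-1}. Feeding this sign into the $p$-adic description of the value, together with the $t\mapsto 1-t$ symmetry underlying Theorem \ref{MT-1}, produces an anti-invariance that forces the value to vanish, so that ${}_3G_3\bigl[\tfrac12,\tfrac13,\tfrac23;0,0,0\mid 1\bigr]_p=0=b(p)$, consistent with $p$ being inert.

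I expect the inert case $p\equiv 2\pmod 3$ to be the main obstacle: establishing the \emph{exact} vanishing, rather than vanishing of the square as in the finite-field approach, is exactly the step that goes beyond Mortenson, and it rests on the $\varphi(-3)$ twist in Theorem \ref{MT-1}. A secondary, purely technical difficulty is to keep track of the normalizing powers of $p$ and the $p$-adic units in the Gross--Koblitz dictionary, so that the finite-field and $p$-adic sides are matched with the correct constant; this is where the interplay of Theorems \ref{MT-1} and \ref{MT-3} must be calibrated with care.
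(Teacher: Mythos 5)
Your proposal follows essentially the same route as the paper: for $p\equiv 1\pmod 3$ the paper identifies ${_3}G_3[\cdots|1]_p$ with $p^2\cdot{_3}F_2(\varphi,\chi_3,\chi_3^2;\varepsilon,\varepsilon \,|\, 1)_p$ via McCarthy's dictionary and invokes Mortenson's finite-field evaluation (your CM/Jacobi-sum matching is the same computation), while for $p\equiv 2\pmod 3$ it carries out precisely the anti-invariance argument you describe — Lemma \ref{lemma-0.1} converts the $\tfrac12$-parameter into the sum $-\sum_{t=2}^{p-1}\varphi(t(t-1))\,{_2}G_2\bigl[\tfrac13,\tfrac23;0,0 \,|\, \tfrac1t\bigr]_p$, and Theorem \ref{MT-1} with $\varphi(-3)=-1$ together with the substitution $t\mapsto 1-t$ forces $A=-A$, hence $A=0=b(p)$. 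The only stray element in your sketch is the appeal to Theorem \ref{MT-3}, which plays no role in this theorem (it is used for Theorem \ref{MT-6}); otherwise your plan is the paper's proof.
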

\begin{theorem}\label{MT-6}
	Let $p>3$ be a prime. 
	Then we have
	\begin{align*}
		{_3}G_3\left[\begin{array}{ccc}
			\frac{1}{2},&\frac{1}{6},& \frac{5}{6} \vspace{.12cm}\\
			0,& 0,& 0
		\end{array}|1
		\right]_p=\gamma(p)a(p),
	\end{align*}
where $\gamma(p):=-1$ if $p\equiv5\pmod{12}$ and $\gamma(p):=1$ otherwise.
\end{theorem}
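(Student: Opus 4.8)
The plan is to obtain Theorem~\ref{MT-6} by specializing the cubic transformation of Theorem~\ref{MT-3} at the value of $x$ for which the argument $\frac{-4(x-1)^3}{27x^2}$ of the second ${_3}G_3$ becomes $1$. Imposing $\frac{-4(x-1)^3}{27x^2}=1$ is equivalent to the cubic $4x^3+15x^2+12x-4=0$, which factors as $(4x-1)(x+2)^2=0$; hence the only candidate values are $x=\frac14$ and the double root $x=-2$. I would discard $x=-2$ precisely because it triggers the correction term $\delta(x+2)\varphi(-1)p$ in Theorem~\ref{MT-3}, and instead work at $x=\frac14$, where that term is inert.

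At $x=\frac14$ the three ingredients of Theorem~\ref{MT-3} simplify cleanly. The argument $\frac1{4x}$ of the first ${_3}G_3$ becomes $1$; since $p>3$ we have $x+2=\frac94\neq0$, so $\delta(x+2)=0$; and $\varphi(1-x)=\varphi\!\left(\frac34\right)=\varphi(3)\varphi(4)^{-1}=\varphi(3)$ because $\varphi(4)=\varphi(2)^2=1$. Substituting into Theorem~\ref{MT-3} yields
\[
{_3}G_3\!\left[\begin{array}{ccc}\frac12,&\frac12,&\frac12\\0,&0,&0\end{array}\Big|\,1\right]_p=\varphi(3)\cdot{_3}G_3\!\left[\begin{array}{ccc}\frac12,&\frac16,&\frac56\\0,&0,&0\end{array}\Big|\,1\right]_p,
\]
and multiplying through by $\varphi(3)$ (using $\varphi(3)^2=1$) isolates the target as $ {_3}G_3[\frac12,\frac16,\frac56;0,0,0|1]_p=\varphi(3)\cdot{_3}G_3[\frac12,\frac12,\frac12;0,0,0|1]_p$.

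The remaining input, and what I expect to be the main obstacle, is the exact evaluation ${_3}G_3[\frac12,\frac12,\frac12;0,0,0|1]_p=a(p)$, the $p$-adic hypergeometric incarnation of the classical supercongruence \eqref{eq-1}. One clean route is to invoke the first equality of Theorem~\ref{MT-3} again at $x=\frac14$, which gives ${_3}G_3[\frac12,\frac12,\frac12;0,0,0|1]_p=p^2\cdot{_3}F_2(\varphi,\varphi,\varphi;\varepsilon,\varepsilon|1)_p$, and then to evaluate this finite field hypergeometric value at $1$ (via Jacobi sums / the trace of Frobenius on the associated $K3$ surface) and match it to the Fourier coefficient $a(p)$ of $\eta^6(4z)$; alternatively this identity may be cited from the existing literature on McCarthy's functions. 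Granting it, we arrive at ${_3}G_3[\frac12,\frac16,\frac56;0,0,0|1]_p=\varphi(3)\,a(p)$.

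Finally I would reconcile the sign $\varphi(3)$ with $\gamma(p)$. By quadratic reciprocity $\varphi(3)=1$ for $p\equiv\pm1\pmod{12}$ and $\varphi(3)=-1$ for $p\equiv\pm5\pmod{12}$, whereas $\gamma(p)=-1$ only for $p\equiv5\pmod{12}$; the two signs therefore disagree exactly when $p\equiv7\pmod{12}$. But every such prime satisfies $p\equiv3\pmod4$, and since $\eta^6(4z)$ is a CM newform with nebentypus $(\frac{-4}{\cdot})$, its coefficient $a(p)$ vanishes whenever $p\equiv3\pmod4$. Hence $\varphi(3)\,a(p)=\gamma(p)\,a(p)$ for all primes $p>3$, which gives ${_3}G_3[\frac12,\frac16,\frac56;0,0,0|1]_p=\gamma(p)a(p)$ and completes the argument.
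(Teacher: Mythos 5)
Your proof is correct, and it takes a genuinely different route from the paper's, although both hinge on the same observation: specialize Theorem \ref{MT-3} at a root of the cubic $4x^3+15x^2+12x-4=(4x-1)(x+2)^2$ so that the second ${_3}G_3$ has argument $1$. The paper chooses the \emph{double} root $x=-2$, deliberately switching the correction term $\delta(x+2)\varphi(-1)p$ on; it then evaluates $p^2\cdot{_3}F_2(\varphi,\varphi,\varphi;\varepsilon,\varepsilon\,|-8)_p$ by Ono's CM evaluation \cite[Theorem 6(i)]{ono}, absorbs the $\varphi(-1)p$ term, and matches with Beukers--Stienstra (Theorem \ref{thrm-3}); its final sign analysis ($\varphi(3)$ versus $\gamma(p)$, with disagreement only at $p\equiv7\pmod{12}$ where $a(p)=0$) is identical to yours. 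You instead choose the simple root $x=\tfrac14$, which switches the correction term off and gives
\begin{align*}
{_3}G_3\left[\begin{array}{ccc}
\frac{1}{2}, &\frac{1}{6}, & \frac{5}{6} \vspace{.12cm}\\
0, & 0, & 0
\end{array}|1\right]_p=\varphi(3)\cdot{_3}G_3\left[\begin{array}{ccc}
\frac{1}{2}, &\frac{1}{2}, & \frac{1}{2} \vspace{.12cm}\\
0, & 0, & 0
\end{array}|1\right]_p=\varphi(3)\cdot p^2\cdot{_3}F_2\left(\begin{array}{ccc}
\varphi, & \varphi, & \varphi\vspace*{0.1cm}\\
& \varepsilon, & \varepsilon
\end{array}|1\right)_p,
\end{align*}
thereby trading Ono's evaluation at $\lambda=-8$ for an evaluation at the degenerate point $\lambda=1$. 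That is the one input you leave to citation, and it is the only place requiring care: $\lambda=1$ is not among the CM values covered by \cite[Theorem 6]{ono}, so the paper's source does not apply there. The evaluation you need, $p^2\cdot{_3}F_2(\varphi,\varphi,\varphi;\varepsilon,\varepsilon\,|1)_p=a(p)$, is nevertheless classical: it is the Evans/Greene--Stanton character-sum evaluation ($4a^2-2p$ when $p\equiv1\pmod4$, $p=a^2+b^2$ with $a$ odd, and $0$ when $p\equiv3\pmod4$), combined with Theorem \ref{thrm-3}; it is precisely the finite-field identity underlying the already-proved supercongruence \eqref{eq-1}, i.e.\ the $d=2$ analogue of the evaluation \cite[Proposition 4.2]{mortenson} that the paper itself invokes with $d=3$ in proving Theorem \ref{MT-5}. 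With that citation made precise your argument is complete. What your route buys is a proof with no correction term to track and a transparent reduction of Theorem \ref{MT-6} to the known weight-three $d=2$ case; what the paper's choice $x=-2$ buys is uniformity of external inputs, since everything it needs comes from Ono's Theorem 6, exactly as in the proofs of Theorem \ref{MT-4} and Corollary \ref{cor2}.
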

As a corollary of Theorems \ref{MT-4}-\ref{MT-6}, we obtain a complete proof of \eqref{eq-2}-\eqref{eq-4}.
\begin{cor}\label{main_thrm}
	Let $p\geq5$ be a prime. We have
	\begin{align}
		&\label{eq-cor-1}\sum_{n=0}^{p-1}\frac{(3n)!(2n)!}{n!^5}108^{-n}\equiv b(p)\pmod{p^2},\\
		&\label{eq-cor-2}\sum_{n=0}^{p-1}\frac{(4n)!}{n!^4}256^{-n}\equiv c(p)\pmod{p^2},\\
		&\label{eq-cor-3}\sum_{n=0}^{p-1}\frac{(6n)!}{(3n)!n!^3}1728^{-n}\equiv \gamma(p)a(p)\pmod{p^2},
	\end{align}
	where $\gamma(p):=-1$ if $p\equiv5\pmod{12}$ and $\gamma(p):=1$ otherwise.
\end{cor}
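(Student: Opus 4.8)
The plan is to prove that each truncated sum on the left of \eqref{eq-cor-1}--\eqref{eq-cor-3} is congruent modulo $p^2$ to the matching special value ${_3}G_3[\cdots|1]_p$, and then to insert the evaluations provided by Theorems \ref{MT-5}, \ref{MT-4}, and \ref{MT-6}. The first step is formal. Using the Gauss multiplication identity $(mn)!=m^{mn}\,n!\prod_{j=1}^{m-1}\left(\frac{j}{m}\right)_n$ to rewrite the factorial quotients in terms of rising factorials, one obtains
\begin{align*}
\frac{(3n)!(2n)!}{n!^5}\,108^{-n} &= \frac{(\frac{1}{2})_n(\frac{1}{3})_n(\frac{2}{3})_n}{n!^3}, \\
\frac{(4n)!}{n!^4}\,256^{-n} &= \frac{(\frac{1}{2})_n(\frac{1}{4})_n(\frac{3}{4})_n}{n!^3}, \\
\frac{(6n)!}{(3n)!\,n!^3}\,1728^{-n} &= \frac{(\frac{1}{2})_n(\frac{1}{6})_n(\frac{5}{6})_n}{n!^3}.
\end{align*}
Hence each sum appearing in the corollary is exactly the truncation at $n=p-1$ of a classical ${_3}F_2$ with upper parameters $\frac{1}{2},\frac{a}{d},\frac{d-a}{d}$ and lower parameters $1,1$ evaluated at $1$, where $(a,d)\in\{(1,3),(1,4),(1,6)\}$.

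The decisive ingredient is a \emph{bridge lemma} asserting that, for $p>3$ and these parameters,
\begin{align*}
\sum_{n=0}^{p-1}\frac{(\frac{1}{2})_n(\frac{a}{d})_n(\frac{d-a}{d})_n}{n!^3}\equiv {_3}G_3\left[\begin{array}{ccc}\frac{1}{2}, & \frac{a}{d}, & \frac{d-a}{d}\\ 0, & 0, & 0\end{array}|1\right]_p \pmod{p^2}.
\end{align*}
To establish it I would begin from McCarthy's definition (Definition \ref{defin1}), which writes ${_3}G_3[\cdots|1]_p$ as a sum over $j$ of products of $p$-adic Gamma values $\Gamma_p(\langle\,\cdot\,\rangle)$ with the requisite powers of $-p$. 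Via the Gross--Koblitz formula the Pochhammer symbols $(\frac{a}{d})_n$ in the truncated series are re-expressed as ratios of $\Gamma_p$, after which one Taylor-expands $\Gamma_p$ to second order about the relevant arguments. Reducing modulo $p^2$, the linear terms reassemble the truncated sum into McCarthy's $j$-sum; one must additionally track the terms with $n$ large, where a numerator Pochhammer symbol becomes divisible by $p$, in order to determine which terms survive modulo $p^2$ and to reconcile the lower parameters $1,1$ of the ${_3}F_2$ with the zeros appearing in ${_3}G_3$.

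Granting the bridge lemma the corollary follows at once: combining it with Theorem \ref{MT-5} gives \eqref{eq-cor-1}, with Theorem \ref{MT-4} gives \eqref{eq-cor-2}, and with Theorem \ref{MT-6} gives \eqref{eq-cor-3}, the sign $\gamma(p)$ being carried along from Theorem \ref{MT-6}. I expect the bridge lemma to be the principal obstacle, as it demands a careful second-order $p$-adic analysis of $\Gamma_p$ together with exact bookkeeping of the truncated-series terms that are divisible by $p$ but not by $p^2$; by contrast, the Gauss-multiplication reduction and the final substitution of Theorems \ref{MT-4}--\ref{MT-6} are routine.
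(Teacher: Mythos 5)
Your proposal is correct and takes essentially the same approach as the paper: the Gauss-multiplication step identifies each sum as the truncation at $p-1$ of the ${_3}F_2$ with upper parameters $\frac{1}{2},\frac{1}{d},\frac{d-1}{d}$, lower parameters $1,1$, and argument $1$ for $d=3,4,6$, after which Theorems \ref{MT-5}, \ref{MT-4}, and \ref{MT-6} yield the three congruences. The only divergence is that your \emph{bridge lemma} is precisely Theorem \ref{thrm-2} (McCarthy's result, already recalled in the paper's preliminaries), whose hypothesis $p\equiv\pm1\pmod{d}$ holds automatically for $d\in\{3,4,6\}$ and $p\geq5$; the paper simply cites it, so the second-order $\Gamma_p$ analysis you anticipate as the principal obstacle is not needed.
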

\section{Preliminaries and some lemmas}\label{pre}
For an odd prime $p$, let $\mathbb{F}_p$ denote the finite field with $p$ elements.
\subsection{Elliptic curve preliminaries}
Let $E$ be an elliptic curve over $\mathbb{F}_p$ given by the Weierstrass form
\begin{align*}
	E:y^2+a_1xy+a_3y=x^3+a_2x^2+a_4x+a_6.
\end{align*}
Using the substitution $y\mapsto\frac{1}{2}(y-a_1x-a_3)$, we have
\begin{align*}
	E:y^2=4x^3+b_2x^2+2b_4x+b_6,
\end{align*}
where $b_2=a_1^2+4a_2$, $b_4=2a_4+a_1a_3$ and $b_6=a_3^2+4a_6$. Employing $y\mapsto2y$ yields
\begin{align}\label{elliptic}
	E:y^2=x^3+\frac{b_2}{4}x^2+\frac{b_4}{2}x+\frac{b_6}{4}.
\end{align}
The trace of
Frobenius endomorphism $a_p(E)$ of $E$ is given by
\begin{align*}
	a_p(E):=p+1-\#E(\mathbb{F}_p),
\end{align*}
where $\#E(\mathbb{F}_p)$ denotes the number of $\mathbb{F}_p$-points on $E$ including the point at infinity. Next, we recall the notion of a quadratic twist. Let $E$ be an elliptic curve given by
\begin{align*}
	E:y^2=x^3+ax^2+bx+c,
\end{align*}
where $a,b,c\in\mathbb{F}_p$. If $D\in\mathbb{F}_p^\times$, then the $D$-quadratic twist of $E$, denoted by $E^D$, is an elliptic curve given by the equation
\begin{align*}
	E^D:y^2=x^3+Dax^2+D^2bx+D^3c.
\end{align*}
It is known that the traces of Frobenius of $E$ and $E^D$ satisfy the following relation:
\begin{align}\label{twist}
	a_p(E)=\left(\frac{D}{p}\right)a_p(E^D).
\end{align}
\subsection{Multiplicative characters and Gauss sums}
Let $\widehat{\mathbb{F}_p^{\times}}$ be the group of all the multiplicative characters on $\mathbb{F}_p^{\times}$. We extend the domain of each $\chi\in \widehat{\mathbb{F}_p^{\times}}$ to $\mathbb{F}_p$ by setting $\chi(0):=0$
including the trivial character $\varepsilon$. 
Let $\mathbb{Z}_p$ and $\mathbb{Q}_p$ denote the ring of $p$-adic integers and the field of $p$-adic numbers, respectively.
Let $\overline{\mathbb{Q}_p}$ be the algebraic closure of $\mathbb{Q}_p$ and $\mathbb{C}_p$ be the completion of $\overline{\mathbb{Q}_p}$.
We know that $\chi\in \widehat{\mathbb{F}_p^{\times}}$ takes values in $\mu_{p-1}$, where $\mu_{p-1}$ is the group of all the $(p-1)$-th roots of unity in $\mathbb{C}^{\times}$. Since $\mathbb{Z}_p^{\times}$ contains all the $(p-1)$-th roots of unity,
we can consider multiplicative characters on $\mathbb{F}_p^\times$
to be maps $\chi: \mathbb{F}_p^{\times} \rightarrow \mathbb{Z}_p^{\times}$.
Let $\omega: \mathbb{F}_p^\times \rightarrow \mathbb{Z}_p^{\times}$ be the Teichm\"{u}ller character.
For $a\in\mathbb{F}_p^\times$, the value $\omega(a)$ is just the $(p-1)$-th root of unity in $\mathbb{Z}_p$ such that $\omega(a)\equiv a \pmod{p}$.
\par Now, we introduce the Gauss sum and recall some of its elementary properties. For further details, see \cite{evans}. Let $\zeta_p$ be a fixed primitive $p$-th root of unity in $\overline{\mathbb{Q}_p}$. 
Then the additive character
$\theta: \mathbb{F}_p \rightarrow \mathbb{Q}_p(\zeta_p)$ is defined by
\begin{align}
	\theta(\alpha):=\zeta_p^{\alpha}.\notag
\end{align}
For $\chi \in \widehat{\mathbb{F}_p^\times}$, the \emph{Gauss sum} is defined by
\begin{align}
	g(\chi):=\sum\limits_{x\in \mathbb{F}_p}\chi(x)\theta(x) .\notag
\end{align}
\begin{lemma}\emph{(\cite[Eqn. (1.12)]{greene}).}\label{lemma2_1}
	For $\chi \in \widehat{\mathbb{F}_p^\times}$, we have
	$$g(\chi)g(\overline{\chi})=p\cdot \chi(-1)-(p-1)\delta(\chi).$$
\end{lemma}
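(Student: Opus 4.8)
The plan is to expand the product of the two Gauss sums into a double sum, collapse it into a single multiplicative character sum by exploiting $\overline{\chi}(y)=\chi(y^{-1})$, and then evaluate the resulting inner additive character sum by orthogonality, carefully isolating the two degenerate contributions that produce the correction terms on the right-hand side.

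First I would write, directly from the definition,
\[
g(\chi)g(\overline{\chi})=\sum_{x\in\mathbb{F}_p}\sum_{y\in\mathbb{F}_p}\chi(x)\overline{\chi}(y)\,\theta(x)\theta(y).
\]
Since $\chi(0)=0$, only $x,y\in\mathbb{F}_p^\times$ contribute, and on $\mathbb{F}_p^\times$ we have $\overline{\chi}(y)=\chi(y^{-1})$, so $\chi(x)\overline{\chi}(y)=\chi(xy^{-1})$. Making the substitution $x=ty$ with $t=xy^{-1}\in\mathbb{F}_p^\times$ (a bijection for each fixed $y$) and using $\theta(x)\theta(y)=\zeta_p^{x+y}=\zeta_p^{y(t+1)}$, this reduces to
\[
g(\chi)g(\overline{\chi})=\sum_{t\in\mathbb{F}_p^\times}\chi(t)\sum_{y\in\mathbb{F}_p^\times}\zeta_p^{y(t+1)}.
\]

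Next I would evaluate the inner sum $\sum_{y\in\mathbb{F}_p^\times}\zeta_p^{y(t+1)}$. When $t\neq-1$, the exponent $y(t+1)$ ranges over all of $\mathbb{F}_p^\times$ as $y$ does, so the sum equals $\sum_{z\in\mathbb{F}_p^\times}\zeta_p^{z}=-1$; when $t=-1$ every term equals $1$, so the sum equals $p-1$. Separating the term $t=-1$ therefore gives
\[
g(\chi)g(\overline{\chi})=-\!\!\sum_{\substack{t\in\mathbb{F}_p^\times\\ t\neq-1}}\!\!\chi(t)+(p-1)\chi(-1)=-\Big(\sum_{t\in\mathbb{F}_p^\times}\chi(t)-\chi(-1)\Big)+(p-1)\chi(-1).
\]

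The one step requiring genuine care is the last one, where the two boundary corrections must be combined into the stated closed form; here the trivial character $\varepsilon$ behaves differently and must be tracked. I would invoke the orthogonality relation $\sum_{t\in\mathbb{F}_p^\times}\chi(t)=(p-1)\delta(\chi)$, where $\delta(\chi)=1$ if $\chi=\varepsilon$ and $\delta(\chi)=0$ otherwise. Substituting this yields
\[
g(\chi)g(\overline{\chi})=-(p-1)\delta(\chi)+\chi(-1)+(p-1)\chi(-1)=p\,\chi(-1)-(p-1)\delta(\chi),
\]
which is precisely the claimed identity. As a consistency check, for nontrivial $\chi$ this reads $p\,\chi(-1)$, while for $\chi=\varepsilon$ it reads $p-(p-1)=1$, matching $g(\varepsilon)^2$ since $g(\varepsilon)=\sum_{x\in\mathbb{F}_p^\times}\zeta_p^{x}=-1$. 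I expect no real obstacle beyond this bookkeeping, as the argument is entirely a finite Fourier-analytic computation.
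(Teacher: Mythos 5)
Your proof is correct and complete. Note that the paper does not prove this lemma at all --- it is quoted directly from Greene (\cite[Eqn.\ (1.12)]{greene}) --- so there is no internal argument to compare against; your computation (expanding $g(\chi)g(\overline{\chi})$ into a double sum, substituting $x=ty$, evaluating $\sum_{y\in\mathbb{F}_p^\times}\zeta_p^{y(t+1)}$ by additive orthogonality, and then applying multiplicative orthogonality to the remaining $\sum_t\chi(t)$) is the standard derivation and handles both degenerate contributions correctly. In particular, you correctly work with the paper's convention $\chi(0):=0$ for \emph{all} characters including $\varepsilon$, which is what makes the $\chi=\varepsilon$ case read $g(\varepsilon)^2=1$ and forces the $-(p-1)\delta(\chi)$ correction term; under the alternative convention $\varepsilon(0)=1$ the identity would take a different form, so this bookkeeping is exactly the point where care was needed, and you got it right.
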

\begin{theorem}\emph{(\cite[Davenport-Hasse Relation]{evans}).}\label{thm2_2}
	Let $m$ be a positive integer and let $p$ be a prime such that $p\equiv 1 \pmod{m}$. For multiplicative characters
	$\chi, \psi \in \widehat{\mathbb{F}_p^\times}$, we have
	\begin{align}
		\prod\limits_{\chi^m=\varepsilon}g(\chi \psi)=-g(\psi^m)\psi(m^{-m})\prod\limits_{\chi^m=\varepsilon}g(\chi).\notag
	\end{align}
\end{theorem}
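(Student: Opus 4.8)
The plan is to reduce the identity to the evaluation of a single Jacobi sum and then to isolate that evaluation as the real content. First I would use the hypothesis $p\equiv 1\pmod m$: the subgroup $\{\chi:\chi^m=\varepsilon\}\subset\widehat{\mathbb{F}_p^{\times}}$ is then cyclic of order $m$, so I fix a character $\rho$ of exact order $m$ and rewrite both products over $\chi^m=\varepsilon$ as products over $\rho^0,\rho^1,\dots,\rho^{m-1}$. The factor $g(\rho^0)=g(\varepsilon)$ on the right equals $-1$: this follows from Lemma \ref{lemma2_1}, which gives $g(\varepsilon)^2=1$, together with the direct evaluation $g(\varepsilon)=\sum_{x\neq0}\theta(x)=-1$. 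This single factor is the source of the overall minus sign, so it suffices to prove the normalized identity
\begin{align*}
\prod_{j=0}^{m-1}g(\rho^{j}\psi)=\psi(m)^{-m}\,g(\psi^{m})\prod_{j=1}^{m-1}g(\rho^{j}).
\end{align*}

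Next I would expand the left-hand side from the definition of the Gauss sum and reorganize it by the value of the total additive variable. Writing out the product gives
\begin{align*}
\prod_{j=0}^{m-1}g(\rho^{j}\psi)=\sum_{x_0,\dots,x_{m-1}\in\mathbb{F}_p^{\times}}\left(\prod_{j=0}^{m-1}(\rho^{j}\psi)(x_j)\right)\theta\!\left(\sum_{j=0}^{m-1}x_j\right),
\end{align*}
and grouping by $s=\sum_j x_j$ and substituting $x_j=s\,y_j$ with $\sum_j y_j=1$ separates the sum over $s$ from the sum over the $y_j$. Since $\rho^{0+1+\cdots+(m-1)}=\rho^{m(m-1)/2}$, the $s$-sum is $g(\psi^{m}\rho^{m(m-1)/2})$, which equals $g(\psi^m)$ when $m$ is odd and carries an extra quadratic character $\varphi=\rho^{m/2}$ when $m$ is even (already in the model case $m=2$, where the companion factor $g(\rho)=g(\varphi)$ on the right absorbs it); the remaining inner sum is precisely the Jacobi sum $J(\psi,\rho\psi,\dots,\rho^{m-1}\psi)$. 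When the total character $\psi^{m}\rho^{m(m-1)/2}$ is nontrivial the $s=0$ contribution vanishes, and the whole identity collapses to the claim
\begin{align*}
J(\psi,\rho\psi,\dots,\rho^{m-1}\psi)=\psi(m)^{-m}\prod_{j=1}^{m-1}g(\rho^{j}).
\end{align*}

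I expect this Jacobi-sum evaluation to be the main obstacle, and indeed it is equivalent to the theorem rather than a genuine simplification: its nontrivial content is that the dependence of the left-hand side on $\psi$ reduces to the single factor $\psi(m)^{-m}$. The chain decomposition of a multiple Jacobi sum into two-variable ones only re-expresses $J$ as $\prod_j g(\rho^j\psi)\big/g(\psi^m\rho^{m(m-1)/2})$ and is therefore circular, so this step needs a genuinely independent argument — the one in the classical references proceeds by a counting/induction argument modeled on the Gauss--Legendre multiplication formula $\prod_{j=0}^{m-1}\Gamma(z+\tfrac{j}{m})=(2\pi)^{(m-1)/2}m^{1/2-mz}\Gamma(mz)$, whose normalization constant $m^{-mz}$ is the exact analogue of the factor $\psi(m)^{-m}$. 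I would also have to dispose separately of the degenerate configurations where $\psi$ is a power of $\rho$, so that some $\rho^{j}\psi$ or the total character equals $\varepsilon$ and the clean Jacobi relation breaks down; there I would fall back on Lemma \ref{lemma2_1} to evaluate the resulting $g(\chi)g(\overline{\chi})=p\,\chi(-1)$ collapses. A conceptually cleaner route, and the one most in keeping with this paper's machinery, is to invoke the Gross--Koblitz formula to rewrite every Gauss sum in terms of the $p$-adic gamma function and then recognize the identity as the $p$-adic Gauss multiplication formula for $\Gamma_p$ at arguments in arithmetic progression with common difference $1/m$; I would keep this in reserve, as it trades the combinatorial difficulty for heavier $p$-adic input not developed in the preliminaries above.
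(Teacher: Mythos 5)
First, a point of comparison: the paper offers no proof of this statement at all --- it is quoted verbatim from Berndt--Evans--Williams \cite{evans} as a known classical result, so there is no internal argument to match your attempt against; what matters is whether your proposal would stand on its own. It does not, and you say so yourself. Your reduction steps are correct and cleanly executed: the identification $g(\varepsilon)=-1$ (consistent with Lemma \ref{lemma2_1}), the expansion of $\prod_{j}g(\rho^{j}\psi)$ as a multiple sum, the substitution $x_j=sy_j$ with $\sum_j y_j=1$ separating off $g\bigl(\psi^{m}\rho^{m(m-1)/2}\bigr)$, and the recognition of the inner sum as the multiple Jacobi sum $J(\psi,\rho\psi,\dots,\rho^{m-1}\psi)$. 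But at that point the proposal terminates in an explicit concession: the evaluation
\begin{align*}
J(\psi,\rho\psi,\dots,\rho^{m-1}\psi)=\psi(m)^{-m}\prod_{j=1}^{m-1}g(\rho^{j})
\end{align*}
is, as you correctly observe, \emph{equivalent} to the theorem, and the chain decomposition of $J$ into two-variable Jacobi sums merely undoes your own reduction. Naming the obstacle is not the same as overcoming it; the whole arithmetic content of Davenport--Hasse --- why the $\psi$-dependence collapses to the single factor $\psi(m)^{-m}$ --- is left unproved. The classical arguments you gesture at (Weil's lifting of characters via monic polynomials, as in Ireland--Rosen or \cite{evans}, or a counting induction) each require a genuinely new idea not present in your sketch, and your reserve route via Gross--Koblitz (Theorem \ref{thm2_3}) is only non-circular if one has an independent proof of the Gauss multiplication formula for $\Gamma_p$, which in many treatments is itself \emph{derived} from Davenport--Hasse; you would need to supply, e.g., the direct $p$-adic-analytic proof of that multiplication formula to close the loop.

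Two secondary loose ends, both fixable but currently unaddressed: for even $m$ you get $g(\psi^{m}\varphi)$ rather than $g(\psi^{m})$ from the $s$-sum (since $\rho^{m(m-1)/2}=\varphi$), and the claim that the factor $g(\rho^{m/2})=g(\varphi)$ on the right ``absorbs'' this requires an extra Jacobi-sum manipulation relating $g(\psi^{m}\varphi)$ to $g(\psi^{m})g(\varphi)$, which is itself a nontrivial instance of the $m=2$ case; and the degenerate configurations ($\psi\in\langle\rho\rangle$, or $\psi^{m}\rho^{m(m-1)/2}=\varepsilon$, where the $s=0$ terms survive) are deferred to Lemma \ref{lemma2_1} without the bookkeeping being carried out. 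In summary: what you have is an honest and correct reduction of the theorem to its irreducible core, plus an accurate map of where complete proofs live --- but not a proof.
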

\subsection{Hypergeometric functions over finite fields}
For multiplicative characters $A$ and $B$ on $\mathbb{F}_p$,
the binomial coefficient ${A \choose B}$ is defined by
\begin{align}\label{jacobi}
	{A \choose B}:=\frac{B(-1)}{p}J(A,\overline{B})=\frac{B(-1)}{p}\sum_{x \in \mathbb{F}_p}A(x)\overline{B}(1-x),
\end{align}
where $J(A, B)$ denotes the Jacobi sum and $\overline{B}$ is the character inverse of $B$. Let $\delta$ denote the function on $\widehat{\mathbb{F}_p^{\times}}$ defined by
\begin{align*}
	\delta(A):=\left\{
	\begin{array}{ll}
		1, & \hbox{if $A=\varepsilon$;} \\
		0, & \hbox{otherwise.}
	\end{array}
	\right.
\end{align*}
We recall the following properties of the binomial coefficients from \cite{greene}:
\begin{align}\label{eq-0.4}
	{A\choose \varepsilon}={A\choose A}=\frac{-1}{p}+\frac{p-1}{p}\delta(A).
\end{align}
In \cite{greene, greene2}, Greene introduced the notion of hypergeometric series over finite fields famously known as \emph{Gaussian hypergeometric series}. He defined hypergeometric functions over finite fields using binomial coefficients as follows.
\begin{definition}(\emph{\cite[Definition 3.10]{greene}}).\label{greene}
	Let $n$ be a positive integer and $x\in \mathbb{F}_{p}$. For multiplicative characters $A_{0},A_{1},\dots,A_{n},B_{1}, \dots,B_{n}$ on $\mathbb{F}_{p}$, the ${_{n+1}}F_n$-hypergeometric function over $\mathbb{F}_{p}$ is defined by
	\begin{align*}
		&{_{n+1}}F_n\left(\begin{array}{cccc}
			A_0, & A_1, &  \ldots, & A_n \\
			& B_1, & \ldots, & B_n
		\end{array}|x
		\right)_p:=\frac{p}{p-1}\sum_{\chi\in\widehat{\mathbb{F}_{p}^{\times}}} {A_0 \chi \choose \chi}{A_{1}\chi \choose B_1\chi}\cdots {A_{n}\chi \choose B_n \chi}\chi(x).
	\end{align*}
\end{definition}
In \cite{FL}, Fuselier et al. gave another definition of hypergeometric function over finite fields. Firstly they defined a period function as follows. 
\begin{definition}(\emph{\cite[p. 28]{FL}})\label{period}
	Let $n$ be a positive integer and $x\in \mathbb{F}_{p}$. For multiplicative characters $A_{0},A_{1},\dots,A_{n},B_{1}, \dots,B_{n}$ on $\mathbb{F}_{p}$, the ${_{n+1}}\mathbb{P}_n$ period function over $\mathbb{F}_{p}$ is defined by
	\begin{align*}
	&{_{n+1}}\mathbb{P}_n\left[\begin{array}{cccc}
		A_0, & A_1, &  \ldots, & A_n \\
		& B_1, & \ldots, & B_n
	\end{array}|x
	\right]:=\delta(x)\prod_{i=1}^{n}J(A_i,\overline{A_i}B_i)\\
		&\hspace*{3cm}+\frac{p^{n+1}}{p-1}\left(\prod_{i=1}^{n}A_iB_i(-1)\right)\sum_{\chi\in\widehat{\mathbb{F}_{p}^{\times}}} {A_0 \chi \choose \chi}{A_{1}\chi \choose B_1\chi}\cdots {A_{n}\chi \choose B_n \chi}\chi(x).
	\end{align*}
We note that the binomial coefficient defined in \cite{FL} is equal to $(-p)$ times the binomial coefficient defined by Greene \cite{greene}. Since we have used Greene's  definition of binomial coefficient, an extra factor of $(-p)^{n+1}$ is appearing in Definition \ref{period}.
\end{definition}
\begin{definition}(\emph{\cite[Eqn (4.9)]{FL}}).\label{fuselier}
		Let $n$ be a positive integer and $x\in \mathbb{F}_{p}$. For multiplicative characters $A_{0},A_{1},\dots,A_{n},B_{1}, \dots,B_{n}$ on $\mathbb{F}_{p}$, the ${_{n+1}}\mathbb{F}_n$-hypergeometric function over $\mathbb{F}_{p}$ is defined by
	\begin{align*}
	&	{_{n+1}}\mathbb{F}_n\left[\begin{array}{cccc}
			A_0, & A_1, &  \ldots, & A_n \\
			& B_1, & \ldots, & B_n
		\end{array}|x
		\right]\\
		&\hspace*{3cm}:=\frac{1}{\prod_{i=1}^{n}J(A_i,\overline{A_i}B_i)} {_{n+1}}\mathbb{P}_n\left[\begin{array}{cccc}
			A_0, & A_1, &  \ldots, & A_n \\
			& B_1, & \ldots, & B_n
		\end{array}|x
		\right].
	\end{align*}
\end{definition}
\subsection{$p$-adic preliminaries}
Firstly, we recall the $p$-adic gamma function. For further details, see \cite{kob}.
For a positive integer $n$,
the $p$-adic gamma function $\Gamma_p(n)$ is defined as
\begin{align}
	\Gamma_p(n):=(-1)^n\prod\limits_{0<j<n,p\nmid j}j\notag
\end{align}
and one extends it to all $x\in\mathbb{Z}_p$ by setting $\Gamma_p(0):=1$ and
\begin{align}
	\Gamma_p(x):=\lim_{x_n\rightarrow x}\Gamma_p(x_n)\notag
\end{align}
for $x\neq0$, where $x_n$ runs through any sequence of positive integers $p$-adically approaching $x$.
This limit exists, is independent of how $x_n$ approaches $x$,
and determines a continuous function on $\mathbb{Z}_p$ with values in $\mathbb{Z}_p^{\times}$.
Let $\pi \in \mathbb{C}_p$ be the fixed root of $x^{p-1} + p=0$ which satisfies
$\pi \equiv \zeta_p-1 \pmod{(\zeta_p-1)^2}$. For $x \in \mathbb{Q}$, we let $\lfloor x\rfloor$ denote the greatest integer less than or equal to $x$ and $\langle x\rangle$ 
denote the fractional part of $x$, i.e., $x-\lfloor x\rfloor$, satisfying $0\leq\langle x\rangle<1$. Then the Gross-Koblitz formula relates Gauss sums and the $p$-adic gamma function as follows.
\begin{theorem}\emph{(\cite[Gross-Koblitz]{gross}).}\label{thm2_3} For $a\in \mathbb{Z}$, we have
	\begin{align}
		g(\overline{\omega}^a)=-\pi^{(p-1)\left\langle \frac{a}{p-1}\right\rangle}\Gamma_p\left(\left\langle \frac{a}{p-1}\right\rangle\right).\notag
	\end{align}
\end{theorem}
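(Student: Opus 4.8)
The plan is to prove the Gross--Koblitz formula by Dwork's $p$-adic analytic method, which realizes the Gauss sum $g(\overline{\omega}^a)$ as a value assembled from the coefficients of a single over-convergent power series and then recognizes that value as $-\pi^{(p-1)\langle a/(p-1)\rangle}\Gamma_p(\langle a/(p-1)\rangle)$. Since both $\overline{\omega}^a$ and the fractional part $\langle a/(p-1)\rangle$ depend only on $a$ modulo $p-1$, I would first reduce to the range $0\le a\le p-2$, where $\langle a/(p-1)\rangle = a/(p-1)$ and the exponent of $\pi$ is the integer $a$; the general case $a\in\mathbb{Z}$ then follows by this periodicity. As a base check, the case $a=0$ is immediate: $g(\varepsilon)=\sum_{x\neq 0}\zeta_p^x=-1$, which equals $-\pi^0\Gamma_p(0)$.

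The first substantive step is to introduce Dwork's splitting function $\theta(T)=\exp(\pi(T-T^p))$ and to establish its two key properties. Writing $\theta(T)=\sum_{m\ge 0}\lambda_m T^m$, I would show that the $\lambda_m$ lie in the ring of integers of $\mathbb{Q}_p(\pi)$ and obey the Dwork growth estimate $v_p(\lambda_m)\ge m(p-1)/p^2$, so that $\theta$ converges on a disc of radius strictly greater than $1$; this \emph{over-convergence} is the feature that makes evaluation at arguments of absolute value $1$ legitimate and is the analytic heart of the proof. I would also verify, using the normalization $\pi\equiv\zeta_p-1\pmod{(\zeta_p-1)^2}$ fixed above, that $\theta(1)=\zeta_p$, so that $\theta$ genuinely splits the additive character $\theta(\cdot)=\zeta_p^{(\cdot)}$.

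Next I would use Dwork's analytic (trace/residue) representation of the additive character to rewrite $g(\overline{\omega}^a)=\sum_{x\in\mathbb{F}_p^\times}\overline{\omega}^a(x)\zeta_p^x$ as a $p$-adic series in the coefficients $\lambda_m$, where multiplicative orthogonality over the Teichm\"uller representatives $\mu_{p-1}$ isolates the residue class $m\equiv a\pmod{p-1}$. The final move is to match this series, term by term, against the Mahler expansion defining $\Gamma_p$ as the continuous extension of $n\mapsto(-1)^n\prod_{0<j<n,\,p\nmid j}j$; carrying out this identification is exactly what produces the factor $-\pi^a\Gamma_p(a/(p-1))$.

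I expect the main obstacle to be this last identification together with the over-convergence estimate: one must prove the Dwork congruences for the $\lambda_m$ and then show that the resulting $p$-adic series is precisely the factorial/Mahler series computing $\Gamma_p(a/(p-1))$, rather than merely agreeing to leading order. Two independent consistency checks guide and confirm the computation. First, Stickelberger's congruence gives $g(\overline{\omega}^a)\equiv -\pi^a/a!\pmod{\pi^{a+1}}$, which matches the leading behaviour of $-\pi^a\Gamma_p(a/(p-1))$, since $a/(p-1)\equiv -a\pmod p$ forces $\Gamma_p(a/(p-1))\equiv 1/a!\pmod p$. Second, the formula is compatible with the two functional equations recorded above: the reflection relation $g(\chi)g(\overline{\chi})=p\,\chi(-1)$ of Lemma~\ref{lemma2_1} corresponds to the reflection formula for $\Gamma_p$, and the Hasse--Davenport product relation of Theorem~\ref{thm2_2} corresponds to the Gauss multiplication formula for $\Gamma_p$; these not only cross-check the result but provide an inductive route for the composite exponents once the prime-related cases are settled.
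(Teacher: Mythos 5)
The paper does not prove Theorem \ref{thm2_3}: it is the Gross--Koblitz formula, recalled verbatim from \cite{gross} and used as a black box, so there is no internal proof to compare yours against. Judged on its own terms, your outline follows the standard Dwork-method proof (essentially the argument in the paper's own reference \cite{kob}), and its skeleton is sound: the reduction to $0\le a\le p-2$, since both sides depend on $a$ only modulo $p-1$; the base value $g(\varepsilon)=-1$ under the paper's convention $\varepsilon(0)=0$; the splitting series $\sum_{m\ge0}\lambda_m T^m$ of $\exp(\pi(T-T^p))$ with the growth estimate $v_p(\lambda_m)\ge m(p-1)/p^2$; and orthogonality over the Teichm\"uller roots $\mu_{p-1}$, which yields $g(\overline{\omega}^a)=(p-1)\sum_{k\ge0}\lambda_{a+k(p-1)}$, the interchange of summations being licensed by over-convergence. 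Your Stickelberger cross-check is also correct, including the congruence $\Gamma_p\left(\frac{a}{p-1}\right)\equiv\frac{1}{a!}\pmod p$.

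Two steps are genuine gaps as described, and they are exactly where the theorem lives. First, $\theta(1)=\zeta_p$ cannot be \emph{verified} by substitution: formally $\exp(\pi(1-1^p))=\exp(0)=1$, and the composed expression $\exp(\pi(T-T^p))$ is meaningless on $|T|=1$ because the argument leaves the disc of convergence of $\exp$; one must instead use the Artin--Hasse/Dwork congruences to show that the coefficient series, evaluated at $1$, converges to a primitive $p$-th root of unity congruent to $1+\pi$ modulo $\pi^2$, which the fixed normalization of $\pi$ then identifies with $\zeta_p$. Second, the decisive identity $(p-1)\sum_{k\ge0}\lambda_{a+k(p-1)}=-\pi^a\,\Gamma_p\left(\frac{a}{p-1}\right)$ is not obtainable by ``term-by-term'' comparison with any defining factorial or Mahler expansion of $\Gamma_p$: one has $\lambda_m=\pi^m/m!$ only for $m\le p-1$, and beyond that range there is no termwise correspondence; the known completions derive the identity from Dwork's congruences combined with the translation behaviour of both sides under $a\mapsto a+1$ (via $\Gamma_p(x+1)=-x\Gamma_p(x)$ for $x\in\mathbb{Z}_p^\times$ and $\Gamma_p(x+1)=-\Gamma_p(x)$ for $x\in p\mathbb{Z}_p$), or from a Banach-trace/cohomological computation. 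Finally, your proposed ``inductive route'' through the functional equations cannot close this gap: Lemma \ref{lemma2_1} pins down only the product $g(\overline{\omega}^a)g(\overline{\omega}^{-a})$, hence $g$ at best up to sign, and Theorem \ref{thm2_2} as stated requires $p\equiv1\pmod m$ and constrains products of Gauss sums rather than individual values; these are consistency checks, as you first rightly call them, not proof steps.
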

McCarthy's $p$-adic hypergeometric function $_{n}G_{n}[\cdots]_p$ is defined as follows.
\begin{definition}(\emph{\cite[Definition 5.1]{mccarthy2}}). \label{defin1}
	Let $p$ be an odd prime and  $t \in \mathbb{F}_p$.
	For positive integers $n$ and $1\leq k\leq n$, let $a_k$, $b_k$ $\in \mathbb{Q}\cap \mathbb{Z}_p$.
	Then the function $_{n}G_{n}[\cdots]_p$ is defined by
	\begin{align}
		&{_n}G_n\left[\begin{array}{cccc}
			a_1, & a_2, & \ldots, & a_n \\
			b_1, & b_2, & \ldots, & b_n
		\end{array}|t
		\right]_p:=\frac{-1}{p-1}\sum_{a=0}^{p-2}(-1)^{an}~~\overline{\omega}^a(t)\notag\\
		&\hspace{2cm} \times
		\prod\limits_{k=1}^n(-p)^{-\lfloor \langle a_k \rangle-\frac{a}{p-1} \rfloor -\lfloor\langle -b_k \rangle +\frac{a}{p-1}\rfloor} \frac{\Gamma_p(\langle a_k-\frac{a}{p-1}\rangle)}{\Gamma_p(\langle a_k \rangle)}
		\frac{\Gamma_p(\langle -b_k+\frac{a}{p-1} \rangle)}{\Gamma_p(\langle -b_k \rangle)}.\notag
	\end{align}
\end{definition}
We recall some lemmas which will be used in the proof of our main results.
\begin{lemma}\emph{(\cite[Lemma 4.1]{mccarthy2}).}\label{lemma-3_1}
	Let $p$ be a prime. For $0\leq a\leq p-2$ and $t\geq 1$ with $p\nmid t$, we have
	\begin{align}
		\omega(t^{-ta})\Gamma_p\left(\left\langle\frac{-ta}{p-1}\right\rangle\right)
		\prod\limits_{h=1}^{t-1}\Gamma_p\left( \frac{h}{t}\right)
		=\prod\limits_{h=0}^{t-1}\Gamma_p\left(\left\langle\frac{1+h}{t}-\frac{a}{p-1}\right\rangle \right).\notag
	\end{align}
\end{lemma}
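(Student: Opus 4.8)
The plan is to read the asserted identity as a $p$-adic incarnation of the Gauss multiplication formula and to prove it by converting every $p$-adic gamma value into a Gauss sum via the Gross--Koblitz formula (Theorem \ref{thm2_3}), whereupon the statement becomes the Hasse--Davenport product relation (Theorem \ref{thm2_2}). Concretely, I would apply Theorem \ref{thm2_2} with $m=t$ and $\psi=\omega^{a}$, so that the product runs over the $t$ characters $\chi$ with $\chi^{t}=\varepsilon$, and then translate each Gauss sum on both sides back into a gamma value.

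First I would set up the dictionary. On the right-hand side of Theorem \ref{thm2_2}, the factor $\psi(m^{-m})=\omega^{a}(t^{-t})=\omega(t^{-ta})$ reproduces the leading character factor of the lemma; the factor $g(\psi^{t})=g(\omega^{ta})$ becomes, by Theorem \ref{thm2_3} applied to $\overline{\omega}^{-ta}$, a constant times $\Gamma_{p}(\langle \frac{-ta}{p-1}\rangle)$; and the product $\prod_{\chi^{t}=\varepsilon}g(\chi)$, indexed by $\chi=\overline{\omega}^{h(p-1)/t}$ for $h=0,\dots,t-1$, becomes a constant times $\prod_{h=1}^{t-1}\Gamma_{p}(\frac{h}{t})$, the $h=0$ term contributing $g(\varepsilon)=-1$. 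On the left-hand side, $\prod_{\chi^{t}=\varepsilon}g(\chi\psi)$ has characters $\chi\psi=\overline{\omega}^{h(p-1)/t-a}$, and Theorem \ref{thm2_3} turns it into a constant times $\prod_{h=0}^{t-1}\Gamma_{p}(\langle \frac{h}{t}-\frac{a}{p-1}\rangle)$; here the reindexing $h\mapsto 1+h$ together with $\langle 1-\frac{a}{p-1}\rangle=\langle -\frac{a}{p-1}\rangle$ matches exactly the product $\prod_{h=0}^{t-1}\Gamma_{p}(\langle \frac{1+h}{t}-\frac{a}{p-1}\rangle)$ appearing in the lemma.

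The remaining work is purely bookkeeping of the signs and the powers of $\pi$. Collecting the leading $-1$ from each invocation of Theorem \ref{thm2_3} and the overall sign in Theorem \ref{thm2_2}, I expect a factor $(-1)^{t}$ on each side, which cancels. Matching the powers of $\pi$ reduces the whole identity to the single fractional-part identity
\begin{align*}
\sum_{h=0}^{t-1}\left\langle \frac{h}{t}-\frac{a}{p-1}\right\rangle=\left\langle \frac{-ta}{p-1}\right\rangle+\frac{t-1}{2},
\end{align*}
which is the classical Hermite-type relation $\sum_{h=0}^{t-1}\langle x+\frac{h}{t}\rangle=\langle tx\rangle+\frac{t-1}{2}$ evaluated at $x=-\frac{a}{p-1}$ and holds unconditionally. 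Once the $\pi$-powers and signs are disposed of, the surviving gamma factors are precisely the two sides of the lemma, completing the argument.

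The main obstacle is that Theorem \ref{thm2_2} presupposes $p\equiv 1\pmod{t}$, since only then do the $t$ characters of order dividing $t$ live in $\widehat{\mathbb{F}_{p}^{\times}}$; for $t=2$ this is automatic, but for $t=3,4,6$ in the complementary residue classes the above indexing is unavailable. To reach the stated generality $p\nmid t$, I would run the same computation over an extension $\mathbb{F}_{p^{s}}$ chosen so that $t\mid p^{s}-1$. The key points that make this legitimate are that the $p$-adic gamma function appearing in Gross--Koblitz depends only on the residue characteristic $p$ and not on $s$, and that $\frac{a}{p-1}=\frac{a(p^{s}-1)/(p-1)}{p^{s}-1}$, so choosing the character $\omega_{s}^{a(p^{s}-1)/(p-1)}$ over $\mathbb{F}_{p^{s}}$ regenerates exactly the fractional parts with denominator $p-1$ demanded by the lemma. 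Tracking the compatibility of the Teichm\"uller characters and of the uniformizers $\pi$ across this extension---and checking that the $\pi$-powers still cancel, as they must since the final identity is the residue-free Hermite relation---is the delicate part of the argument; everything else is formal.
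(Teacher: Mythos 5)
First, a point of reference: the paper does not actually prove this lemma --- it is quoted verbatim from McCarthy \cite{mccarthy2}, so your proposal can only be measured against the standard derivation in the literature rather than against an in-paper argument. For $t\mid p-1$ your plan is correct and is essentially that standard derivation: applying Theorem \ref{thm2_2} with $m=t$, $\psi=\omega^{a}$, converting every Gauss sum by Theorem \ref{thm2_3}, the signs do come out to $(-1)^{t}$ on both sides, and the $\pi$-exponents match via the Hermite identity $\sum_{h=0}^{t-1}\langle x+\frac{h}{t}\rangle=\langle tx\rangle+\frac{t-1}{2}$ at $x=\frac{1}{t}-\frac{a}{p-1}$, exactly as you say; I checked the bookkeeping and that half is sound.

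The genuine gap is in the reduction to $\mathbb{F}_{p^{s}}$, and it is not a routine compatibility check. Over $\mathbb{F}_{q}$ with $q=p^{s}$ the Gross--Koblitz formula is not the $s=1$ statement of Theorem \ref{thm2_3}: it reads $g(\overline{\omega}_q^{\,b})=-\pi^{(p-1)\sum_{i=0}^{s-1}\langle bp^{i}/(q-1)\rangle}\prod_{i=0}^{s-1}\Gamma_p\left(\left\langle \frac{bp^{i}}{q-1}\right\rangle\right)$, so each Gauss sum contributes $s$ gamma factors, not one. With your choice $\psi=\omega_q^{a(q-1)/(p-1)}$ one has $\left\langle \frac{a(q-1)}{p-1}\cdot\frac{p^{i}}{q-1}\right\rangle=\left\langle \frac{a}{p-1}\right\rangle$ for every $i$ (since $p^{i}\equiv1\pmod{p-1}$), while multiplication by $p^{i}$ merely permutes the residues $h/t$ modulo $1$; moreover $\psi(t^{-t})=\omega(t^{-ta})^{s}$ because $(q-1)/(p-1)\equiv s\pmod{p-1}$. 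Consequently the computation over $\mathbb{F}_{q}$ proves $L^{s}=R^{s}$, where $L$ and $R$ denote the two sides of the lemma --- not $L=R$. Since $L,R\in\mathbb{Z}_p^{\times}$ and the torsion of $\mathbb{Z}_p^{\times}$ is $\mu_{p-1}$ for odd $p$, this yields only $L=\zeta R$ with $\zeta\in\mu_{\gcd(s,p-1)}$, and you cannot in general choose $s$ coprime to $p-1$: any admissible $s$ is a multiple of $d=\mathrm{ord}_{t}(p)$, and for instance $t=3$, $p\equiv2\pmod 3$ forces $2=d\mid\gcd(s,p-1)$. So your argument determines the identity only up to sign, and it does so precisely in the residue classes $p\equiv-1\pmod d$ that this paper exists to handle --- the very same up-to-sign obstruction that limited Mortenson's approach. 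To close the gap you need an additional input: either prove $L\equiv R\pmod p$ directly (distinct $(p-1)$-st roots of unity remain distinct modulo $p$, so $\zeta\equiv1$ forces $\zeta=1$), or bypass Gauss sums altogether and deduce the lemma from the $p$-adic Gauss--Legendre multiplication formula for $\Gamma_p$, which is valid for every $x\in\mathbb{Z}_p$ and every $t$ with $p\nmid t$, requires no characters of order $t$ on $\mathbb{F}_p^{\times}$, and is the route by which identities of this shape are established in the source being quoted.
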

\begin{lemma}\emph{(\cite[Lemma 3.4]{BSM})}.\label{lemma-3_5}
Let $p$ be an odd prime. For $0<a\leq p-2$, we have
\begin{align}\label{eq-12}
\Gamma_{p}\left(\left\langle1-\frac{a}{p-1}\right\rangle\right)\Gamma_{p}\left(\frac{a}{p-1}\right) = - \overline{\omega}^{a}(-1).
\end{align}
\end{lemma}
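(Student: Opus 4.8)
The identity is the reflection (complement) formula for $\Gamma_p$ at the complementary arguments $\frac{a}{p-1}$ and $1-\frac{a}{p-1}$. The plan is to avoid invoking any external reflection formula and instead prove it using only tools already set up in the excerpt: I would convert each $\Gamma_p$-value into a Gauss sum via the Gross--Koblitz formula (Theorem \ref{thm2_3}), and then evaluate the resulting product of Gauss sums a second way using Lemma \ref{lemma2_1}. Comparing the two evaluations will collapse to exactly \eqref{eq-12}.

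First I would record the relevant fractional-part reductions: since $0<a\le p-2$, both $\frac{a}{p-1}$ and $1-\frac{a}{p-1}$ lie in $(0,1)$, so $\langle \frac{a}{p-1}\rangle=\frac{a}{p-1}$ and $\langle 1-\frac{a}{p-1}\rangle=1-\frac{a}{p-1}$; in particular the left-hand side of \eqref{eq-12} is literally $\Gamma_p(\frac{a}{p-1})\Gamma_p(1-\frac{a}{p-1})$. Applying Theorem \ref{thm2_3} to the exponent $a$ gives $g(\overline\omega^a)=-\pi^{a}\Gamma_p(\frac{a}{p-1})$, while applying it to the exponent $p-1-a$ (which also lies in the admissible range and satisfies $\langle\frac{p-1-a}{p-1}\rangle=1-\frac{a}{p-1}$) gives $g(\overline\omega^{\,p-1-a})=-\pi^{\,p-1-a}\Gamma_p(1-\frac{a}{p-1})$. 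Since $\overline\omega^{\,p-1-a}=\overline\omega^{-a}=\overline{\overline\omega^a}$, multiplying these two evaluations yields $g(\overline\omega^a)g(\overline{\overline\omega^a})=\pi^{p-1}\Gamma_p(\frac{a}{p-1})\Gamma_p(1-\frac{a}{p-1})$.

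Next I would evaluate the same product through Lemma \ref{lemma2_1}. Because $0<a<p-1$, the character $\overline\omega^a$ is nontrivial, so $\delta(\overline\omega^a)=0$ and Lemma \ref{lemma2_1} gives $g(\overline\omega^a)g(\overline{\overline\omega^a})=p\,\overline\omega^a(-1)$. Equating the two expressions for the product and using the defining relation $\pi^{p-1}=-p$ of the fixed root $\pi$ of $x^{p-1}+p=0$, I obtain $p\,\overline\omega^a(-1)=-p\,\Gamma_p(\frac{a}{p-1})\Gamma_p(1-\frac{a}{p-1})$; cancelling $p$ and rearranging then gives exactly \eqref{eq-12}.

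The computation is short, and I do not anticipate a genuine obstacle; the only thing requiring care is the bookkeeping of signs and indices. Specifically, I must verify the fractional-part reductions $\langle\frac{a}{p-1}\rangle=\frac{a}{p-1}$ and $\langle\frac{p-1-a}{p-1}\rangle=1-\frac{a}{p-1}$ from the hypothesis $0<a\le p-2$, correctly match $\overline\omega^{p-1-a}$ with the inverse character $\overline{\overline\omega^a}$ (using $\overline\omega^{p-1}=\varepsilon$), and track the two minus signs coming from Gross--Koblitz together with the sign in $\pi^{p-1}=-p$, so that everything combines to leave precisely the single minus sign on the right-hand side.
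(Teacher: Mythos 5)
Your proof is correct, and since the paper itself states this lemma only by citation to \cite{BSM} (reproducing no proof), your derivation via the Gross--Koblitz formula (Theorem \ref{thm2_3}) applied at exponents $a$ and $p-1-a$, combined with Lemma \ref{lemma2_1} for the nontrivial character $\overline{\omega}^a$ and the relation $\pi^{p-1}=-p$, is exactly the standard argument used in the cited source. All the delicate points check out: the reductions $\left\langle\frac{a}{p-1}\right\rangle=\frac{a}{p-1}$ and $\left\langle\frac{p-1-a}{p-1}\right\rangle=1-\frac{a}{p-1}$ are valid for $0<a\leq p-2$, the hypothesis $0<a<p-1$ guarantees $\delta(\overline{\omega}^a)=0$, and the signs combine correctly to give $p\,\overline{\omega}^a(-1)=-p\,\Gamma_p\left(\frac{a}{p-1}\right)\Gamma_p\left(1-\frac{a}{p-1}\right)$, which is \eqref{eq-12}.
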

The following lemma relates fractional and integral parts of certain rational numbers which will be used to simplify certain products of the $p$-adic gamma function. 
\begin{lemma}\emph{(\cite[Lemma 2.6]{SB}).}\label{lemma-3_3}
	Let $p$ be an odd prime. Let $d\geq2$ be an integer such that $p\nmid d$. Then, for $1\leq a\leq p-2$, we have
	\begin{align*}
	\left\lfloor\frac{-da}{p-1}\right\rfloor = \sum_{h=1}^{d-1} \left\lfloor\frac{h}{d}-\frac{a}{p-1}\right\rfloor -1.
	\end{align*}
\end{lemma}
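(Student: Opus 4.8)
The plan is to deduce this purely combinatorial identity from Hermite's classical floor-function identity,
\begin{align}\label{eq:hermite-plan}
\sum_{h=0}^{d-1}\left\lfloor x+\frac{h}{d}\right\rfloor=\lfloor dx\rfloor,
\end{align}
valid for every real number $x$ and every positive integer $d$. The point is that the right-hand side of the statement is precisely the left-hand side of \eqref{eq:hermite-plan}, specialized to $x=-\frac{a}{p-1}$, with only the single term $h=0$ deleted. First I would set $\theta:=\frac{a}{p-1}$ and record the one arithmetic input the argument needs: the hypothesis $1\le a\le p-2$ forces $0<\theta<1$. This is the only property of $a$ and $p$ used; the condition $p\nmid d$ plays no role in the floor identity itself and is relevant only in the later applications where this lemma feeds into products of the $p$-adic gamma function.

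Next I would apply \eqref{eq:hermite-plan} with $x=-\theta$ and the given $d$, obtaining $\sum_{h=0}^{d-1}\lfloor \frac{h}{d}-\theta\rfloor=\lfloor -d\theta\rfloor$. Isolating the $h=0$ summand, which equals $\lfloor-\theta\rfloor=-1$ because $0<\theta<1$, yields
\begin{align*}
\lfloor -d\theta\rfloor=-1+\sum_{h=1}^{d-1}\left\lfloor\frac{h}{d}-\theta\right\rfloor.
\end{align*}
Substituting back $\theta=\frac{a}{p-1}$, so that $\lfloor -d\theta\rfloor=\lfloor\frac{-da}{p-1}\rfloor$, and rearranging gives exactly the claimed identity. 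This is the complete argument on the Hermite route.

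Should a self-contained derivation avoiding \eqref{eq:hermite-plan} be preferred, I would instead count directly. For each $h\in\{1,\dots,d-1\}$ the quantity $\frac{h}{d}-\theta$ lies in $(-1,1)$, so its floor is $0$ when $\frac{h}{d}\ge\theta$ and $-1$ when $\frac{h}{d}<\theta$; hence the sum equals $-\#\{h\ge 1:h<d\theta\}=-(\lceil d\theta\rceil-1)$, and the relation $\lceil d\theta\rceil=-\lfloor-d\theta\rfloor$ recovers the identity. The only step demanding care in this second route is the possibility of equality $\frac{h}{d}=\theta$, which genuinely occurs (for instance $d=2$, $p=5$, $a=2$): the count of positive integers strictly below $d\theta$ must be written as $\lceil d\theta\rceil-1$, not $\lfloor d\theta\rfloor$, to stay correct in the integer case. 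This boundary bookkeeping is the whole difficulty of the lemma, and the Hermite route sidesteps it automatically, which is why I would present that as the main proof.
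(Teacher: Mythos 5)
Your proof is correct and complete. Note first that this paper does not actually prove Lemma \ref{lemma-3_3}: it is quoted verbatim from \cite[Lemma 2.6]{SB}, so there is no in-paper argument to compare against; your write-up therefore supplies a proof the present paper omits. The Hermite route is sound: the identity $\sum_{h=0}^{d-1}\lfloor x+\tfrac{h}{d}\rfloor=\lfloor dx\rfloor$ holds for all real $x$, specializing it at $x=-\tfrac{a}{p-1}$ gives the full sum as $\lfloor\tfrac{-da}{p-1}\rfloor$, and peeling off the $h=0$ term, which equals $-1$ precisely because $1\le a\le p-2$ forces $0<\tfrac{a}{p-1}<1$, yields the stated identity after rearranging. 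Your observation that $p\nmid d$ is never used is accurate --- the identity is a pure floor-function fact, and that hypothesis is only relevant downstream, where the lemma is combined with Lemma \ref{lemma-3_1} (which does require $p\nmid t$) to simplify products of $p$-adic gamma values. Your alternative counting argument is also handled correctly: each summand with $1\le h\le d-1$ lies in $(-1,1)$, so the sum is $-\#\{h\ge 1: h<d\theta\}$, and writing this count as $\lceil d\theta\rceil-1$ rather than $\lfloor d\theta\rfloor$ is exactly the right bookkeeping for the boundary case $\tfrac{h}{d}=\theta$ (which, as your example $d=2$, $p=5$, $a=2$ shows, does occur); together with $\lceil d\theta\rceil=-\lfloor-d\theta\rfloor$ this recovers the identity. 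Either route would serve as a complete proof; the Hermite specialization is the cleaner one to present, as you say, since it absorbs the integrality boundary case automatically.
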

The next lemma expresses certain product of values of $p$-adic gamma functions in terms of a character sum.
\begin{lemma}\emph{(\cite[Lemma 3.4]{Fuselier-McCarthy}).}\label{lemma-0.1}
	For $p$ an odd prime and $a\in\mathbb{Z}$, with $0<a<p-1$, we have
	\begin{align*}
		\frac{\Gamma_p\left(\left\langle\frac{a}{p-1}\right\rangle\right)\Gamma_p\left(\left\langle\frac{1}{2}-\frac{a}{p-1}\right\rangle\right)}{\Gamma_p\left(\frac{1}{2}\right)}(-p)^{-\left\lfloor\frac{1}{2}-\frac{a}{p-1}\right\rfloor}=-\sum_{t=2}^{p-1}\omega^a(-t)\varphi(t(t-1)).
	\end{align*}
\end{lemma}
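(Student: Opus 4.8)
\emph{Proof proposal.} The plan is to start from the character sum on the right-hand side and push it through the standard chain ``Jacobi sum $\rightsquigarrow$ Gauss sums $\rightsquigarrow$ $p$-adic gamma values'' supplied by the Gross--Koblitz formula (Theorem \ref{thm2_3}). First I would clear the denominators inside the summand. Writing $\omega^a(-t)=\omega^a(-1)\omega^a(t)$ and $\varphi(t(t-1))=\varphi(t)\varphi(t-1)=\varphi(-1)\varphi(t)\varphi(1-t)$, the terms $t=0,1$ vanish, so the sum over $t=2,\dots,p-1$ extends to all of $\mathbb{F}_p$ and collapses to a single Jacobi sum in the paper's convention $J(A,B)=\sum_x A(x)B(1-x)$:
\[
\sum_{t=2}^{p-1}\omega^a(-t)\varphi(t(t-1))=\omega^a(-1)\varphi(-1)\sum_{t\in\mathbb{F}_p}(\omega^a\varphi)(t)\varphi(1-t)=\omega^a(-1)\varphi(-1)\,J(\omega^a\varphi,\varphi).
\]

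Next I would factor the Jacobi sum into Gauss sums. For $a\neq\frac{p-1}{2}$ all three of $\omega^a\varphi$, $\varphi$, and their product $\omega^a$ are nontrivial, so $J(\omega^a\varphi,\varphi)=g(\omega^a\varphi)g(\varphi)/g(\omega^a)$. Expressing each character as a power of $\overline{\omega}$ — namely $\varphi=\overline{\omega}^{(p-1)/2}$, $\omega^a=\overline{\omega}^{\,p-1-a}$, and $\omega^a\varphi=\overline{\omega}^{\,c}$ with $c\equiv\frac{p-1}{2}-a\pmod{p-1}$ — the Gross--Koblitz formula renders each Gauss sum as $-\pi^{(p-1)\langle\,\cdot\,\rangle}\Gamma_p(\langle\,\cdot\,\rangle)$, where the three relevant fractional parts come out to be exactly $\tfrac12$, $1-\tfrac{a}{p-1}$, and $\langle\tfrac12-\tfrac{a}{p-1}\rangle$.

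Then I would collect the powers of $\pi$ and the gamma factors. Setting $s=\tfrac{a}{p-1}$ and using $\pi^{p-1}=-p$, the net exponent of $\pi$ is $(p-1)\bigl[\langle\tfrac12-s\rangle+s-\tfrac12\bigr]$, which by the elementary identity $\langle\tfrac12-s\rangle+s-\tfrac12=-\lfloor\tfrac12-s\rfloor$ produces precisely the factor $(-p)^{-\lfloor\frac12-\frac{a}{p-1}\rfloor}$; the three Gauss-sum signs combine to a single $-1$. The remaining gamma quotient is $\Gamma_p(\langle\tfrac12-s\rangle)\Gamma_p(\tfrac12)/\Gamma_p(1-s)$. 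I would then convert $\Gamma_p(1-s)$ in the denominator into $\Gamma_p(s)$ in the numerator via the reflection formula of Lemma \ref{lemma-3_5} (which emits a factor $-\overline{\omega}^a(-1)$), and trade the numerator $\Gamma_p(\tfrac12)$ for a denominator one via the special value $\Gamma_p(\tfrac12)^2=-\varphi(-1)$ (the $a=\frac{p-1}{2}$ instance of Lemma \ref{lemma-3_5}). Finally the accumulated sign characters $\overline{\omega}^a(-1)$ and $\varphi(-1)$ cancel against the prefactor $\omega^a(-1)\varphi(-1)$ from the first step, since $\omega^a(-1)\overline{\omega}^a(-1)=1$ and $\varphi(-1)^2=1$, leaving exactly the left-hand side.

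The main obstacle is purely bookkeeping: keeping the fractional-part and floor arithmetic consistent across the subcases $s<\tfrac12$ and $s>\tfrac12$, and verifying that the $\pi$-exponent reduction genuinely lands on the stated floor — this is where stray signs or powers of $p$ are most likely to slip in. Separately, the degenerate value $a=\frac{p-1}{2}$ (where $\omega^a\varphi=\varepsilon$ and the Gauss-sum factorization of the Jacobi sum fails) I would dispatch by direct evaluation: the left-hand side reduces to $\Gamma_p(0)=1$, while the right-hand side telescopes, using $\varphi(t)^2=1$, to $-\varphi(-1)\sum_{u=1}^{p-2}\varphi(u)=-\varphi(-1)\bigl(-\varphi(-1)\bigr)=1$, matching perfectly.
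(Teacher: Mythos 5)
Your proof is correct. Note that the paper itself gives no proof of this lemma---it is quoted directly from Fuselier--McCarthy \cite[Lemma 3.4]{Fuselier-McCarthy}---and your argument (character sum $\to$ Jacobi sum $\to$ Gauss sums via Gross--Koblitz, then the reflection formula of Lemma \ref{lemma-3_5} and $\Gamma_p(\tfrac12)^2=-\varphi(-1)$ to tidy the gamma factors) is essentially the proof given in that reference, just run from the right-hand side to the left. One small simplification: the factorization $J(A,B)=g(A)g(B)/g(AB)$ requires only $AB\neq\varepsilon$, which holds here because $0<a<p-1$ forces $\omega^a\neq\varepsilon$; so your separate treatment of $a=\frac{p-1}{2}$ (where the identity still holds, as $J(\varepsilon,\varphi)=-1=g(\varepsilon)$) is unnecessary, although your direct verification of that case is also correct.
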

Finally, we recall two theorems. Theorem \ref{thrm-1} is a $p$-adic analogue of a classical identity and Theorem \ref{thrm-2} gives a congruence relation between a $p$-adic hypergeometric function $ {_3}G_3[\cdots]_p$ and a truncated hypergeometric series $ _3F_2[\cdots]_{p-1}$.
\begin{theorem}\emph{(\cite[Theorem 2.5]{Fuselier-McCarthy}).}\label{thrm-1}
	Let $p$ be an odd prime and define $s(p):=\Gamma_p\left(\frac{1}{4}\right)\Gamma_p\left(\frac{3}{4}\right)\Gamma_p\left(\frac{1}{2}\right)^2=(-1)^{\left\lfloor\frac{p-1}{4}\right\rfloor+\left\lfloor\frac{p-1}{2}\right\rfloor}$. For $1\neq x\in\mathbb{F}_p^{\times}$,
		\begin{align*}
		{_3}G_3\left[\begin{array}{ccc}
			\frac{1}{2},& \frac{1}{2}, &\frac{1}{2} \vspace{.12cm}\\
			0, & 0, & 0
		\end{array}|\frac{1}{x} \right]_p &= s(p)\cdot\varphi(2(1-x))\cdot{_3}G_3\left[\begin{array}{ccc}
			\frac{1}{2}, &\frac{1}{4}, & \frac{3}{4} \vspace{.12cm}\\
			0,& 0, & 0
		\end{array}|-\frac{(1-x)^2}{4x}
		\right]_p \\
		& \hspace*{0.8cm} + \delta(x+1)\cdot\varphi(-1)\cdot p.
	\end{align*} 
\end{theorem}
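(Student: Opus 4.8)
The plan is to prove the identity entirely at the level of Gauss sums, using the Gross--Koblitz formula to linearize both $p$-adic hypergeometric functions and then invoking McCarthy's multiplication formula for $\Gamma_p$ (Lemma \ref{lemma-3_1}), which is the $p$-adic incarnation of the Davenport--Hasse relation and is exactly the device that converts the numerator data $(\tfrac12,\tfrac12,\tfrac12)$ into $(\tfrac12,\tfrac14,\tfrac34)$. First I would unfold both sides from Definition \ref{defin1} into single sums over $0\le a\le p-2$ and substitute the Gross--Koblitz formula (Theorem \ref{thm2_3}), so that each summand becomes a monomial in Gauss sums $g(\overline{\omega}^{\,j})$ times an explicit power of $-p$ and a value of $\overline{\omega}^a$ at the argument. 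On the left this realizes the finite field ${_3}F_2$ with numerator characters $\varphi,\varphi,\varphi$ (the dictionary already recorded in the first line of Theorem \ref{MT-3}); on the right it realizes the ${_3}F_2$ attached to the numerator data $\tfrac12,\tfrac14,\tfrac34$, whose Gauss sums involve the characters $\overline{\omega}^{(p-1)/4}$ and its inverse.

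The engine of the proof is the comparison of the two Gauss-sum monomials after a single reindexing of the summation. Writing $w=-\tfrac{(1-x)^2}{4x}$ for the right-hand argument and $z=\tfrac1x$ for the left-hand one, I would use the classical quadratic transformation underlying the statement --- the one that, through Clausen's identity, matches ${_2}F_1(\tfrac14,\tfrac14;1;z)^2$ against ${_2}F_1(\tfrac18,\tfrac38;1;w)^2$ --- as the template dictating the correct reindexing $\chi\mapsto\chi^2$. Carrying this substitution through the Gauss-sum expression forces products of the form $g(\overline{\omega}^a)g(\overline{\omega}^a\varphi)$, and Lemma \ref{lemma-3_1} (with $t=2$ and $t=4$) together with Lemma \ref{lemma2_1} collapses these into the Gauss sums of the $(\tfrac12,\tfrac14,\tfrac34)$ side, up to explicit constant and character factors. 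Those constant factors, evaluated by Gross--Koblitz, assemble into $\Gamma_p(\tfrac14)\Gamma_p(\tfrac34)\Gamma_p(\tfrac12)^2=s(p)$, while the character factor produced by the $2$-to-$1$ change of variables is exactly $\varphi(2(1-x))$; the closed form $s(p)=(-1)^{\lfloor(p-1)/4\rfloor+\lfloor(p-1)/2\rfloor}$ then follows from the reflection property of $\Gamma_p$ recorded in Lemma \ref{lemma-3_5}.

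The subtle point --- and what I expect to be the main obstacle --- is the additive correction $\delta(x+1)\varphi(-1)p$. The right-hand argument $w=-\tfrac{(1-x)^2}{4x}$ degenerates precisely when $(x+1)^2=0$, i.e. $x=-1$, at which point $w=1$ and the $(\tfrac12,\tfrac14,\tfrac34)$ hypergeometric sum sits at the boundary where the finite field period function acquires its $\delta$-supported term (the summand $\delta(\cdot)\prod_i J(A_i,\overline{A_i}B_i)$ in Definition \ref{period}). To capture this cleanly I would pass through the period functions of Definitions \ref{period} and \ref{fuselier} rather than working with the raw character sums, so that the degenerate contribution appears as a single explicit Jacobi-sum term; evaluating that term via Lemma \ref{lemma2_1} yields the factor $\varphi(-1)p$ and pins down its sign. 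The remaining difficulty is purely one of bookkeeping: tracking the nested floor exponents $(-p)^{-\lfloor\langle a_k\rangle-\frac{a}{p-1}\rfloor-\lfloor\langle -b_k\rangle+\frac{a}{p-1}\rfloor}$ of Definition \ref{defin1} through the reindexing and the multiplication formula, for which Lemma \ref{lemma-3_3} supplies the needed identities among the floor functions of $\tfrac{-2a}{p-1}$ and $\tfrac{-4a}{p-1}$.
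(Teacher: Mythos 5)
A preliminary structural point: the paper contains no proof of this statement. Theorem \ref{thrm-1} is quoted verbatim, with citation, from \cite[Theorem 2.5]{Fuselier-McCarthy}, and is used as a black box in the proof of Theorem \ref{MT-4}; so your proposal cannot be matched against an argument of the paper, only judged on its own terms.

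On its own terms, your sketch assembles the right raw materials (Definition \ref{defin1}, Gross--Koblitz, the multiplication formula of Lemma \ref{lemma-3_1}), but it has a gap that is fatal for exactly the primes the theorem is meant to reach. You propose to realize the right-hand side as the finite-field ${}_3F_2$ attached to the data $(\frac12,\frac14,\frac34)$, ``whose Gauss sums involve the characters $\overline{\omega}^{(p-1)/4}$ and its inverse.'' A character of order $4$ exists in $\widehat{\mathbb{F}_p^\times}$ only when $p\equiv 1\pmod 4$; when $p\equiv 3\pmod 4$ the exponent $(p-1)/4$ is not an integer and ${}_3G_3\left[\frac12,\frac14,\frac34;\cdots\right]_p$ corresponds to no finite-field hypergeometric function at all. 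That case is precisely the point of the theorem: it is what allows Fuselier--McCarthy, and this paper in Theorem \ref{MT-4} and \eqref{eq-cor-2}, to go beyond Mortenson's $p\equiv 1\pmod 4$ result \cite{mortenson}. A correct argument must therefore stay entirely at the level of $\Gamma_p$, where Lemma \ref{lemma-3_1} with $t=2,4$ holds for every odd $p$ with no congruence condition, so that quartic characters never enter; you cite this lemma, but you aim it at a Gauss-sum target that does not exist for half of all primes. Two further points are not mere bookkeeping: (i) the reindexing $\chi\mapsto\chi^2$, i.e.\ $a\mapsto 2a$ modulo $p-1$, is $2$-to-$1$ rather than a bijection of $\{0,\dots,p-2\}$, and splitting the sum between the subgroup of squares and its $\varphi$-twisted coset is exactly where the factors $\varphi(2(1-x))$, $s(p)$ and the exceptional term must come from; (ii) the term $\delta(x+1)\varphi(-1)p$ lives at $x=-1$, where the \emph{inner} argument equals $1$, so it cannot arise from the $\delta(\cdot)$ summand of Definition \ref{period}, which is supported at argument $0$; it comes instead from the failure of orthogonality encoded in the $\delta(\chi)$ term of Lemma \ref{lemma2_1}. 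Finally, Lemma \ref{lemma-3_5} only treats arguments of the form $a/(p-1)$, so it gives $\Gamma_p\left(\frac14\right)\Gamma_p\left(\frac34\right)$ only when $4\mid p-1$; for general odd $p$ the evaluation of $s(p)$ requires the full reflection formula for $\Gamma_p$.
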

\begin{theorem}\emph{(\cite[Theorem 2.5]{mccarthy1}).}\label{thrm-2}
	Let $2\leq d\in\mathbb{Z}$ and let $p$ be an odd prime such that $p\equiv\pm1\pmod d$. Then
	\begin{align*}
			{_3}G_3\left[\begin{array}{ccc}
			\frac{1}{2},& \frac{1}{d}, &\frac{d-1}{d} \vspace{.12cm}\\
			0, & 0, & 0
		\end{array}|1 \right]_p\equiv 	{_3}F_2\left[\begin{array}{ccc}
		\frac{1}{2}, & \frac{1}{d}, &\frac{d-1}{d} \vspace{.12cm}\\
		 & 1, & 1
	\end{array}|1\right]_{p-1}\pmod{p^2}.
	\end{align*}
\end{theorem}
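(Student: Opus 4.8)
The plan is to transform both sides into sums of products of $p$-adic gamma values and to match them term by term modulo $p^2$. On the hypergeometric side, consider
\[
S := \sum_{n=0}^{p-1}\frac{\left(\frac12\right)_n\left(\frac1d\right)_n\left(\frac{d-1}{d}\right)_n}{(n!)^3}.
\]
For a parameter $\alpha\in\{\frac12,\frac1d,\frac{d-1}{d}\}$ and $0\le n\le p-1$, I would first express the rising factorial $(\alpha)_n=\prod_{j=0}^{n-1}(\alpha+j)$ through $\Gamma_p$ by iterating the functional equation, which multiplies by $-x$ when $p\nmid x$ and by $-1$ when $p\mid x$. This yields $(\alpha)_n=(-1)^n(-p)^{m_\alpha(n)}\,\Gamma_p(\langle\alpha+n\rangle)/\Gamma_p(\alpha)$, where $m_\alpha(n)$ counts the indices $0\le j<n$ with $\alpha+j\in p\mathbb{Z}_p$. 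The crucial bookkeeping is this power of $p$: it records exactly when a factor of the Pochhammer symbol becomes divisible by $p$, and it is what forces all but a controlled set of terms to be negligible modulo $p^2$. The hypothesis $p\equiv\pm1\pmod d$ guarantees that $\frac1d$ and $\frac{d-1}{d}$ lie in $\mathbb{Z}_p$ with denominators prime to $p$, and it pins down the single index in $[0,p-1]$ at which each of these Pochhammer symbols acquires its factor of $p$.

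Next I would bring McCarthy's function into the same shape. Specializing Definition \ref{defin1} with $n=3$, $(a_1,a_2,a_3)=(\frac12,\frac1d,\frac{d-1}{d})$, all $b_k=0$, and $t=1$, the factor $\overline\omega^a(1)=1$ drops out, each $\Gamma_p(\langle-b_k\rangle)=\Gamma_p(0)=1$, and the three $b_k$-gamma factors collapse to $\Gamma_p(\frac{a}{p-1})^3$. Since $0\le\frac{a}{p-1}<1$ one has $\lfloor\frac{a}{p-1}\rfloor=0$, so the exponents of $(-p)$ reduce to $-\lfloor a_k-\frac{a}{p-1}\rfloor\in\{0,1\}$, equal to $1$ precisely when $a>(p-1)a_k$. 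Thus
\[
{_3}G_3\!\left[\begin{array}{ccc}\frac12,&\frac1d,&\frac{d-1}{d}\\0,&0,&0\end{array}\Big|1\right]_p
=\frac{-1}{p-1}\sum_{a=0}^{p-2}(-1)^{3a}\,\Gamma_p\!\left(\tfrac{a}{p-1}\right)^{3}\prod_{k=1}^{3}(-p)^{-\lfloor a_k-\frac{a}{p-1}\rfloor}\,\frac{\Gamma_p(\langle a_k-\frac{a}{p-1}\rangle)}{\Gamma_p(a_k)}.
\]
The heart of the matching is a change of index $a\leftrightarrow n$ together with the reflection formula of Lemma \ref{lemma-3_5}, namely $\Gamma_p(\langle1-\frac{a}{p-1}\rangle)\Gamma_p(\frac{a}{p-1})=-\overline\omega^a(-1)$, and the floor identity of Lemma \ref{lemma-3_3}. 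These convert the factor $\Gamma_p(\frac{a}{p-1})$ and the quotient $\Gamma_p(\langle a_k-\frac{a}{p-1}\rangle)/\Gamma_p(a_k)$ attached to index $a$ into exactly the gamma form of $1/n!$ and of $(\alpha)_n$ attached to the corresponding $n$, the three denominators $n!$ matching the three $b_k=0$; the signs $(-1)^{3a}$ and the $(-p)$-powers are designed to reproduce the signs and the exponents $m_\alpha(n)$ from the first step. Carrying this out reduces the claim to a term-by-term agreement of the two reindexed sums.

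The step I expect to be the genuine obstacle is upgrading the comparison from modulo $p$ to modulo $p^2$. To this precision one cannot treat $\Gamma_p$ as locally constant: each gamma factor must be expanded to first order, $\Gamma_p(x+pm)\equiv\Gamma_p(x)\bigl(1+pm\,\tfrac{\Gamma_p'}{\Gamma_p}(x)\bigr)\pmod{p^2}$, and the resulting $O(p)$ corrections from the three parameters must be shown to cancel or to be absorbed by the accompanying $(-p)$-powers. Equivalently, one must verify that the mismatch between McCarthy's range $a=0,\dots,p-2$ and the truncation range $n=0,\dots,p-1$, together with the terms whose Pochhammer symbol already carries a factor of $p$, contributes nothing modulo $p^2$; this is precisely where the factor of $p$ identified in the first step, and the arithmetic of $p\equiv\pm1\pmod d$ controlling the location of the singular index, become indispensable. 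The remaining manipulations — the Gross--Koblitz normalizations of Theorem \ref{thm2_3} and the multiplication formula of Lemma \ref{lemma-3_1} used to reconcile the $\Gamma_p(\frac1d)$-type constants — are routine once this precision issue is settled.
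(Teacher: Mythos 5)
Your proposal is an outline, not a proof, and the place where it stops is exactly the place where the whole content of the theorem lives. Note first that the paper you are being compared against does not prove this statement at all: it is quoted verbatim as McCarthy's Theorem 2.5 (reference \cite{mccarthy1}), so the relevant benchmark is McCarthy's original argument, whose broad strategy (Gross--Koblitz bookkeeping of $(-p)$-exponents plus first-order $p$-adic expansion of $\Gamma_p$) your plan does mirror. Your first two steps are essentially sound: the specialization of Definition \ref{defin1} at $t=1$, $b_k=0$ is computed correctly (the exponents $-\lfloor a_k-\frac{a}{p-1}\rfloor\in\{0,1\}$, the collapse to $\Gamma_p(\frac{a}{p-1})^3$), and writing $(\alpha)_n$ through $\Gamma_p$ via the functional equation is the right opening move, although your formula $(\alpha)_n=(-1)^n(-p)^{m_\alpha(n)}\Gamma_p(\langle\alpha+n\rangle)/\Gamma_p(\alpha)$ is already imprecise: the factors picked up at the $p$-divisible indices are the actual values $\alpha+j\in p\mathbb{Z}_p$, not literally powers of $-p$, and $\Gamma_p(\alpha+n)$ is not the same as $\Gamma_p(\langle\alpha+n\rangle)$; this unit-level sloppiness is fatal at the intended precision of $p^2$.

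The genuine gap is your third and fourth paragraphs. You correctly identify that the theorem is a statement modulo $p^2$, that to this precision $\Gamma_p$ cannot be treated as locally constant, and that the $O(p)$ corrections ``must be shown to cancel or to be absorbed'' --- but you never show it. Proving that cancellation is the theorem: it requires (i) pinning down, separately in the cases $p\equiv1\pmod d$ and $p\equiv-1\pmod d$, the exact indices $n$ at which $(\frac1d)_n$ and $(\frac{d-1}{d})_n$ (and $(\frac12)_n$) acquire their factor of $p$, and showing that terms carrying two or more such factors die modulo $p^2$ while terms carrying exactly one must be matched against the $G$-terms whose $(-p)$-exponent is positive; (ii) expanding every surviving $\Gamma_p$-quotient to first order (equivalently, controlling the harmonic-sum corrections coming from $\Gamma_p'/\Gamma_p$) and verifying that these linear-in-$p$ terms agree on both sides, not merely that they are individually $O(p)$; and (iii) reconciling the index ranges $a=0,\dots,p-2$ versus $n=0,\dots,p-1$, where the unmatched term is not automatically negligible and must be accounted for explicitly. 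Declaring the remaining manipulations ``routine once this precision issue is settled'' inverts the logic: everything you did carry out is the routine part (and would suffice only for a congruence modulo $p$), while the precision issue is the theorem itself. As it stands, the proposal would need several pages of new, case-dependent computation before it could be called a proof.
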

\section{Proof of Theorems \ref{MT-1}, \ref{MT-2} and \ref{MT-3}}
We first prove a proposition which plays an essential role in the proof of Theorem \ref{MT-1}. This proposition gives a relation between the traces of Frobenius of two elliptic curves.
\begin{proposition}\label{prop-1}
	Let $p>3$ be a prime and $E_t:y^2+3xy+ty=x^3$ be a family of elliptic curves where $t\in\mathbb{F}_p$ such that $t\neq0,1$. Then we have
	\begin{align*}
		a_p(E_t)=\left(\frac{-3}{p}\right)a_p(E_{1-t}).
	\end{align*}
\end{proposition}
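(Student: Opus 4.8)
The plan is to produce the relation from the fact that $E_t$ carries a rational point of order three and that the quotient of $E_t$ by this point is a quadratic twist of $E_{1-t}$. Note first that $E_t$ and $E_{1-t}$ generically have different $j$-invariants, so they are not isomorphic and no single change of variables can match their point counts directly; the argument must therefore route through an isogeny. Since $E_t\colon y^2+3xy+ty=x^3$ is in Tate normal form, the point $P=(0,0)$ lies on $E_t$ and the line $y=0$ is its inflectional tangent (it meets $E_t$ only at $x=0$, with multiplicity three). Hence $P$ has order three and, using the group law, $-P=(0,-t)$, so $C:=\{O,(0,0),(0,-t)\}$ is a cyclic subgroup of order three defined over $\mathbb{F}_p$.

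Next I would compute the three-isogenous curve $E':=E_t/C$ by V\'{e}lu's formulas. With $a_1=3$, $a_3=t$ and $a_2=a_4=a_6=0$, the single pair $\{(0,0),(0,-t)\}$ produces the V\'{e}lu sums $v=3t$ and $w=t^2$, which give
\begin{align*}
E'\colon y^2+3xy+ty=x^3-15t\,x-(27t+7t^2).
\end{align*}
I would then reduce both $E'$ and $E_{1-t}$ to the form \eqref{elliptic} and compute their invariants $c_4$ and $c_6$. The heart of the argument is the verification of the two identities
\begin{align*}
c_4(E')=(-3)^2\,c_4(E_{1-t}),\qquad c_6(E')=(-3)^3\,c_6(E_{1-t}).
\end{align*}
Because $p>3$, the pair $(c_4,c_6)$ determines an elliptic curve up to $\mathbb{F}_p$-isomorphism, and the displayed scaling is exactly the one induced by the $(-3)$-quadratic twist; hence $E'\cong E_{1-t}^{-3}$ over $\mathbb{F}_p$.

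Finally I would assemble the three ingredients. An isogeny defined over $\mathbb{F}_p$ preserves the trace of Frobenius, so $a_p(E_t)=a_p(E')$; the isomorphism above gives $a_p(E')=a_p(E_{1-t}^{-3})$; and the twist relation \eqref{twist} gives $a_p(E_{1-t}^{-3})=\varphi(-3)\,a_p(E_{1-t})$, where I use that $\varphi(-3)^2=1$. Chaining these equalities yields $a_p(E_t)=\varphi(-3)\,a_p(E_{1-t})$, as claimed.

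I expect the main obstacle to be the bookkeeping in V\'{e}lu's formulas — getting the constants in the linear and constant coefficients of the Weierstrass equation of $E'$ exactly right — together with pinning down the correct twist. The value $(-3)^2=9$ only forces the twisting parameter to be $\pm3$; it is the comparison of $c_6$ that fixes the sign as $-3$ and thereby produces the factor $\varphi(-3)$ rather than $\varphi(3)$, so that computation must be carried out with care. As a safeguard I would confirm the final identity numerically for a few small primes and admissible values of $t$.
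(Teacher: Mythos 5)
Your proposal is correct and follows essentially the same route as the paper: both exploit the order-$3$ point $(0,0)$, pass to the $3$-isogenous curve $E'\colon y^2+3xy+ty=x^3-15tx-(27t+7t^2)$ (the paper via \cite[Theorem 12.16]{Washington}, you via V\'elu's formulas, which agree), identify $E'$ as the $(-3)$-quadratic twist of $E_{1-t}$, and conclude from isogeny-invariance of the trace together with \eqref{twist}. The only cosmetic difference is in verifying the twist identification: the paper uses the explicit substitution $x\mapsto x+3$, while you compare $(c_4,c_6)$-invariants, and the two identities you claim, $c_4(E')=9\,c_4(E_{1-t})$ and $c_6(E')=-27\,c_6(E_{1-t})$, do check out.
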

\begin{proof}
	We have $E_t: y^2+3xy+ty=x^3$ where $t\in\mathbb{F}_p$ such that $t\neq0,1$. Clearly, $P=(0,0)$ is a point of order 3 on $E_t$. Using \cite[Theorem 12.16]{Washington}, we can find an isogeny $\alpha$ from $E_t$ to $E_{t}^\prime$ such that ker$(\alpha)=\{\infty,P,-P\}$ and $E_{t}^\prime$ is given by
\begin{align*}
	E_t^\prime:y^2+3xy+ty=x^3-15tx+(-27t-7t^2).
\end{align*}
We reduce $E_{t}^\prime$ to the form \eqref{elliptic} and obtain 
\begin{align*}
	E_t^\prime:y^2=x^3+\frac{9}{4}x^2-\frac{27t}{2}x-\frac{27}{4}(t^2+4t).
\end{align*}
We know that trace of Frobenius is invariant under isogeny. Therefore,
\begin{align}\label{eq-1.1}
	a_p(E_t)=a_p(E_t^\prime).
\end{align}
Also, $E_{1-t}:y^2+3xy+(1-t)y=x^3$. We write $E_{1-t}$ in the form \eqref{elliptic} as follows.
\begin{align*}
	E_{1-t}:y^2=x^3+\frac{9}{4}x^2+\frac{3(1-t)}{2}x+\frac{(1-t)^2}{4}.
\end{align*}
The $(-3)$-quadratic twist of $E_{1-t}$ is given by
\begin{align}\label{twist-3}
	y^2=x^3-\frac{27}{4}x^2+\frac{27(1-t)}{2}x-\frac{27}{4}(1-t)^2.
\end{align}
Employing $x\mapsto x+3$ in \eqref{twist-3} yields
\begin{align*}
	y^2=x^3+\frac{9}{4}x^2-\frac{27t}{2}x-\frac{27}{4}(t^2+4t),
\end{align*}
which is the elliptic curve $E_t^\prime$. Therefore, $E_t^\prime$ is a $(-3)$-quadratic twist of $E_{1-t}$. Hence, \eqref{twist} yields
\begin{align}\label{eq-1.2}
	a_p(E_{1-t})=\left(\frac{-3}{p}\right)a_p(E_t^\prime).
\end{align}
Combining \eqref{eq-1.1} and \eqref{eq-1.2}, we obtain the desired result.
\end{proof}
\begin{proof}[Proof of Theorem \ref{MT-1}]
	Let $E_{a_1,a_3}:y^2+a_1xy+a_3y=x^3$ be a family of elliptic curves where $a_1,a_3\in\mathbb{F}_p^\times$. We let $T$ denote a generator of $\widehat{\mathbb{F}_p^{\times}}$. From the proof of Theorem 1.1 of \cite{lennon}, we have
	\begin{align*}
		a_p(E_{a_1,a_3})=-\frac{1}{p}-\frac{1}{p(p-1)}\sum_{l=0}^{p-2}g(T^{-l})^3g(T^{3l})T^l\left(\frac{-a_3}{a_1^3}\right).
	\end{align*}
	Taking $T=\omega$ and then using Gross-Koblitz formula we obtain
	\begin{align*}
		a_p(E_{a_1,a_3})&=-\frac{1}{p-1}-\frac{1}{p(p-1)}\sum_{l=1}^{p-2}\overline{\omega}^l\left(\frac{-a_1^3}{a_3}\right)(-p)^{3\left\langle\frac{l}{p-1}\right\rangle+\left\langle-\frac{3l}{p-1}\right\rangle}\\
		&\hspace*{1cm}\times\Gamma_p\left(\frac{l}{p-1}\right)^3\Gamma_p\left(\left\langle\frac{-3l}{p-1}\right\rangle\right)\\
		&=-\frac{1}{p-1}-\frac{1}{p(p-1)}\sum_{l=1}^{p-2}\overline{\omega}^l\left(\frac{-a_1^3}{a_3}\right)(-p)^{-\left\lfloor\frac{-3l}{p-1}\right\rfloor}\\
		&\hspace*{1cm}\times\Gamma_p\left(\frac{l}{p-1}\right)^3\Gamma_p\left(\left\langle\frac{-3l}{p-1}\right\rangle\right).
	\end{align*}
	Using Lemma \ref{lemma-3_1} with $t=3$ and Lemma \ref{lemma-3_3} with $d=3$, we deduce that
	\begin{align*}
		a_p(E_{a_1,a_3})&=-\frac{1}{p-1}-\frac{1}{p(p-1)}\sum_{l=1}^{p-2}\overline{\omega}^l\left(\frac{-a_1^3}{27a_3 }\right)(-p)^{-\left\lfloor\frac{1}{3}-\frac{l}{p-1}\right\rfloor-\left\lfloor\frac{2}{3}-\frac{l}{p-1}\right\rfloor+1}\\
		&\times\frac{\Gamma_p\left(\frac{l}{p-1}\right)^3\Gamma_p\left(\left\langle\frac{1}{3}-\frac{l}{p-1}\right\rangle\right)\Gamma_p\left(\left\langle\frac{2}{3}-\frac{l}{p-1}\right\rangle\right)\Gamma_p\left(\left\langle1-\frac{l}{p-1}\right\rangle\right)}{\Gamma_p\left(\frac{1}{3}\right)
			\Gamma_p\left(\frac{2}{3}\right)}.
	\end{align*}
	Using \eqref{eq-12} and then adding and subtracting the term under the summation for $l=0$, we have
	\begin{align*}
		a_p(E_{a_1,a_3})&=-\frac{1}{p-1}\sum_{l=0}^{p-2}\overline{\omega}^l\left(\frac{a_1^3}{27a_3}\right)(-p)^{-\left\lfloor\frac{1}{3}-\frac{l}{p-1}\right\rfloor-\left\lfloor\frac{2}{3}-\frac{l}{p-1}\right\rfloor}\\
		&\times\frac{\Gamma_p\left(\frac{l}{p-1}\right)^2\Gamma_p\left(\left\langle\frac{1}{3}-\frac{l}{p-1}\right\rangle\right)\Gamma_p\left(\left\langle\frac{2}{3}-\frac{l}{p-1}\right\rangle\right)}{\Gamma_p\left(\frac{1}{3}\right)
			\Gamma_p\left(\frac{2}{3}\right)}\\
		&={_2}G_2\left[\begin{array}{cc}
			\frac{1}{3}, &  \frac{2}{3}\vspace*{0.1cm}\\
			0, & 0
		\end{array}|\frac{a_1^3}{27a_3}
		\right]_p.
	\end{align*}
	Taking $a_1=3, a_3=t$ and $a_1=3, a_3=1-t$, we obtain
	\begin{align*}
		a_p(E_{3,t})&={_2}G_2\left[\begin{array}{cc}
			\frac{1}{3}, &  \frac{2}{3}\vspace*{0.1cm}\\
			0, & 0
		\end{array}|\frac{1}{t}
		\right]_p,\\
		a_p(E_{3,1-t})&={_2}G_2\left[\begin{array}{cc}
			\frac{1}{3}, &  \frac{2}{3}\vspace*{0.1cm}\\
			0, & 0
		\end{array}|\frac{1}{1-t}
		\right]_p.
	\end{align*}
	Now using Proposition \ref{prop-1}, we complete the proof of the theorem.
\end{proof}
\begin{proof}[Proof of Theorem \ref{MT-2}]
	Let $E_t:y^2=x^3-3x^2+4t$ be a family of elliptic curves where $t\in\mathbb{F}_p$ such that $t\neq0,1$. From \cite[Theorem 3.4]{BS1}, we obtain
	\begin{align*}
		a_p(E_t)&=p\cdot\varphi(t)\cdot{_2}G_2\left[\begin{array}{cc}
			\frac{1}{2}, &  \frac{1}{2}\vspace*{0.1cm}\\
			\frac{1}{3}, &  \frac{2}{3}
		\end{array}|t
		\right]_p\\
		&=-\frac{p\cdot\varphi(t)}{p-1}\sum_{l=0}^{p-2}\overline{\omega}^l\left(t\right)(-p)^{-\left\lfloor\frac{1}{3}+\frac{l}{p-1}\right\rfloor-\left\lfloor\frac{2}{3}+\frac{l}{p-1}\right\rfloor-2\left\lfloor\frac{1}{2}-\frac{l}{p-1}\right\rfloor}\\
		&\times\frac{\Gamma_p\left(\left\langle\frac{1}{2}-\frac{l}{p-1}\right\rangle\right)^2\Gamma_p\left(\left\langle\frac{1}{3}+\frac{l}{p-1}\right\rangle\right)\Gamma_p\left(\left\langle\frac{2}{3}+\frac{l}{p-1}\right\rangle\right)}{\Gamma_p\left(\frac{1}{2}\right)^2\Gamma_p\left(\frac{1}{3}\right)
			\Gamma_p\left(\frac{2}{3}\right)}.
	\end{align*}
Letting $l=\frac{p-1}{2}-k$, we have
\begin{align*}
	a_p(E_t)=-\frac{p}{p-1}\sum_{k=1-\frac{p-1}{2}}^{\frac{p-1}{2}}\overline{\omega}^k\left(\frac{1}{t}\right)(-p)^{-\left\lfloor\frac{1}{6}-\frac{k}{p-1}\right\rfloor-\left\lfloor\frac{5}{6}-\frac{k}{p-1}\right\rfloor-2\left\lfloor\frac{k}{p-1}\right\rfloor-1}\\
	\times\frac{\Gamma_p\left(\left\langle\frac{k}{p-1}\right\rangle\right)^2\Gamma_p\left(\left\langle\frac{1}{6}-\frac{k}{p-1}\right\rangle\right)\Gamma_p\left(\left\langle\frac{5}{6}-\frac{k}{p-1}\right\rangle\right)}{\Gamma_p\left(\frac{1}{2}\right)^2\Gamma_p\left(\frac{1}{3}\right)
		\Gamma_p\left(\frac{2}{3}\right)}.
\end{align*}
Using Lemma \ref{lemma-3_1} with $t=3$ and $a=\frac{p-1}{2}$, and the fact that $\Gamma_p\left(\frac{1}{2}\right)^2=-\varphi(-1)$, we deduce that
\begin{align*}
		a_p(E_t)=-\frac{\varphi(-3)}{p-1}\sum_{k=1-\frac{p-1}{2}}^{\frac{p-1}{2}}\overline{\omega}^k\left(\frac{1}{t}\right)(-p)^{-\left\lfloor\frac{1}{6}-\frac{k}{p-1}\right\rfloor-\left\lfloor\frac{5}{6}-\frac{k}{p-1}\right\rfloor-2\left\lfloor\frac{k}{p-1}\right\rfloor}\\
	\times\frac{\Gamma_p\left(\left\langle\frac{k}{p-1}\right\rangle\right)^2\Gamma_p\left(\left\langle\frac{1}{6}-\frac{k}{p-1}\right\rangle\right)\Gamma_p\left(\left\langle\frac{5}{6}-\frac{k}{p-1}\right\rangle\right)}{\Gamma_p\left(\frac{1}{6}\right)
		\Gamma_p\left(\frac{5}{6}\right)}.
\end{align*}
Since the above sum is invariant under $k\mapsto k+(p-1)$ so we have 
\begin{align}
		a_p(E_t)&=-\frac{\varphi(-3)}{p-1}\sum_{k=0}^{p-2}\overline{\omega}^k\left(\frac{1}{t}\right)(-p)^{-\left\lfloor\frac{1}{6}-\frac{k}{p-1}\right\rfloor-\left\lfloor\frac{5}{6}-\frac{k}{p-1}\right\rfloor-2\left\lfloor\frac{k}{p-1}\right\rfloor}\notag\\
	&\times\frac{\Gamma_p\left(\left\langle\frac{k}{p-1}\right\rangle\right)^2\Gamma_p\left(\left\langle\frac{1}{6}-\frac{k}{p-1}\right\rangle\right)\Gamma_p\left(\left\langle\frac{5}{6}-\frac{k}{p-1}\right\rangle\right)}{\Gamma_p\left(\frac{1}{6}\right)
		\Gamma_p\left(\frac{5}{6}\right)}\notag\\
	&=\varphi(-3)\cdot{_2}G_2\left[\begin{array}{cc}
		\frac{1}{6}, &  \frac{5}{6}\vspace*{0.1cm}\\
		0, & 0
	\end{array}|\frac{1}{t}
	\right]_p.\label{eq-1.3}
\end{align}
Replacing $t$ with $1-t$, we obtain
\begin{align}\label{eq-1.4}
	a_p(E_{1-t})=\varphi(-3)\cdot{_2}G_2\left[\begin{array}{cc}
		\frac{1}{6}, &  \frac{5}{6}\vspace*{0.1cm}\\
		0, & 0
	\end{array}|\frac{1}{1-t}
	\right]_p.
\end{align}
Now, taking $x\mapsto x+1$ in $E_t$ and $E_{1-t}$ gives the following curves, respectively,
\begin{align*}
	E_t:y^2=x^3-3x+4t-2,\\
	E_{1-t}:y^2=x^3-3x+2-4t.
\end{align*}
It is easy to verify that $E_{1-t}$ is a $(-1)$-quadratic twist of $E_t$. Therefore, 
\begin{align}\label{eq-1.5}
	a_p(E_t)=\left(\frac{-1}{p}\right)a_p(E_{1-t}).
\end{align}
Combining \eqref{eq-1.3}, \eqref{eq-1.4}, and \eqref{eq-1.5} we complete the proof of the theorem.
\end{proof}
Next, we prove Theorem \ref{MT-3} using a transformation of Fuselier et al. \cite[Theorem 9.14]{FL}. We remark that there is a typo in \cite[Theorem 9.14]{FL}. The factor $\frac{1}{(q-1)^2}$ should be $\frac{1}{q-1}$, and we recall the result below with this correction.
\begin{proof}[Proof of Theorem \ref{MT-3}]
	For $x\neq0,1$, using \cite[Theorem 9.14]{FL} with $A=B=\varphi$ and Lemma \ref{lemma2_1} for $\chi=\varphi$ and $\chi=\varepsilon$, we have
	\begin{align}\label{eq-1.6}
A_x:={_3}\mathbb{F}_2\left[\begin{array}{ccc}
		\varphi, & \varphi, &  \varphi\vspace*{0.1cm}\\
		& \varepsilon, & \varepsilon
	\end{array}|4x
	\right]=\frac{\varphi(1-x)}{p-1}\sum_{\chi\in\widehat{\mathbb{F}_p^\times}}\frac{g(\varphi\chi^3)g(\overline{\chi})^3}{g(\varphi)}\chi\left(\frac{x^2}{4(x-1)^3}\right)\notag\\
	\hspace*{1cm}+\delta(x+2)\varphi(-1)p.
	\end{align}
Using Davenport-Hasse relation with $\psi=\chi^3$ and $m=2$, we have
\begin{align}\label{eq-1.07}
	g(\varphi\chi^3)=\frac{g(\chi^6)g(\varphi)\overline{\chi}^3(4)}{g(\chi^3)}.
\end{align}
Substituting \eqref{eq-1.07} in \eqref{eq-1.6}, we obtain
\begin{align*}
	A_x=\frac{\varphi(1-x)}{p-1}\sum_{\chi\in\widehat{\mathbb{F}_p^\times}}\frac{g(\chi^6)g(\overline{\chi})^3}{g(\chi^3)}\chi\left(\frac{x^2}{4^4(x-1)^3}\right)+\delta(x+2)\varphi(-1)p.
\end{align*}
Taking $\chi=\omega^a$ yields
\begin{align*}
	A_x=\frac{\varphi(1-x)}{p-1}\sum_{a=0}^{p-2}\frac{g(\overline{\omega}^{-6a})g(\overline{\omega}^a)^{3}}{g(\overline{\omega}^{-3a})}\overline{\omega}^a\left(\frac{4^4(x-1)^3}{x^2}\right)+\delta(x+2)\varphi(-1)p.
\end{align*}
Using Gross-Koblitz formula, we have
\begin{align*}
	A_x&=-\frac{\varphi(1-x)}{p-1}\sum_{a=0}^{p-2}\overline{\omega}^a\left(\frac{4^4(x-1)^3}{x^2}\right)(-p)^{3\left\langle\frac{a}{p-1}\right\rangle+\left\langle\frac{-6a}{p-1}\right\rangle-\left\langle\frac{-3a}{p-1}\right\rangle}\\
	&\hspace{1cm}\times \frac{\Gamma_p\left(\frac{a}{p-1}\right)^3\Gamma_p\left(\left\langle\frac{-6a}{p-1}\right\rangle\right)}{\Gamma_p\left(\left\langle\frac{-3a}{p-1}\right\rangle\right)}+\delta(x+2)\varphi(-1)p\\
	&=-\frac{\varphi(1-x)}{p-1}\sum_{a=1}^{p-2}\overline{\omega}^a\left(\frac{4^4(x-1)^3}{x^2}\right)(-p)^{-\left\lfloor\frac{-6a}{p-1}\right\rfloor+\left\lfloor\frac{-3a}{p-1}\right\rfloor}\frac{\Gamma_p\left(\frac{a}{p-1}\right)^3\Gamma_p\left(\left\langle\frac{-6a}{p-1}\right\rangle\right)}{\Gamma_p\left(\left\langle\frac{-3a}{p-1}\right\rangle\right)}\\
	&\hspace*{1cm}-\frac{\varphi(1-x)}{p-1}+\delta(x+2)\varphi(-1)p.
\end{align*}
Employing Lemma \ref{lemma-3_1} with $t=6$ and $t=3$, and Lemma \ref{lemma-3_3} with $d=3$ and $d=6$, we deduce that
\begin{align*}
	A_x&=-\frac{\varphi(1-x)}{p-1}-\frac{\varphi(1-x)}{p-1}\sum_{a=1}^{p-2}\overline{\omega}^a\left(\frac{4(x-1)^3}{27x^2}\right)(-p)^{-\left\lfloor\frac{1}{6}-\frac{a}{p-1}\right\rfloor-\left\lfloor\frac{5}{6}-\frac{a}{p-1}\right\rfloor-\left\lfloor\frac{1}{2}-\frac{a}{p-1}\right\rfloor}\\
	&\hspace*{0.5cm}\times\frac{\Gamma_p\left(\frac{a}{p-1}\right)^3\Gamma_p\left(\left\langle\frac{1}{6}-\frac{a}{p-1}\right\rangle\right)\Gamma_p\left(\left\langle\frac{5}{6}-\frac{a}{p-1}\right\rangle\right)\Gamma_p\left(\left\langle\frac{1}{2}-\frac{a}{p-1}\right\rangle\right)}{\Gamma_p\left(\frac{1}{6}\right)\Gamma_p\left(\frac{5}{6}\right)\Gamma_p\left(\frac{1}{2}\right)}\\
	&\hspace*{1cm}+\delta(x+2)\varphi(-1)p.
\end{align*}
Since $\overline{\omega}^a(-1)=(-1)^a$, we have $\overline{\omega}^a(-1)(-1)^a=1$. Substituting this value and then adding and subtracting the term under the summation for $a=0$, we deduce that
\begin{align}\label{eq-1.7}
	A_x&=-\frac{\varphi(1-x)}{p-1}\sum_{a=0}^{p-2}\overline{\omega}^a\left(\frac{-4(x-1)^3}{27x^2}\right)(-1)^a(-p)^{-\left\lfloor\frac{1}{6}-\frac{a}{p-1}\right\rfloor-\left\lfloor\frac{5}{6}-\frac{a}{p-1}\right\rfloor-\left\lfloor\frac{1}{2}-\frac{a}{p-1}\right\rfloor}\notag\\
	&\hspace*{0.5cm}\times\frac{\Gamma_p\left(\frac{a}{p-1}\right)^3\Gamma_p\left(\left\langle\frac{1}{6}-\frac{a}{p-1}\right\rangle\right)\Gamma_p\left(\left\langle\frac{5}{6}-\frac{a}{p-1}\right\rangle\right)\Gamma_p\left(\left\langle\frac{1}{2}-\frac{a}{p-1}\right\rangle\right)}{\Gamma_p\left(\frac{1}{6}\right)\Gamma_p\left(\frac{5}{6}\right)\Gamma_p\left(\frac{1}{2}\right)}\notag\\
	&\hspace*{1cm}+\delta(x+2)\varphi(-1)p\notag\\
	&=\varphi(1-x)\cdot{_3}G_3\left[\begin{array}{ccc}
		\frac{1}{2}, & \frac{1}{6}, & \frac{5}{6}\vspace*{0.1cm}\\
		0, & 0, & 0
	\end{array}|\frac{-4(x-1)^3}{27x^2}
	\right]_p+\delta(x+2)\varphi(-1)p.
\end{align}
Also, using Definitions \ref{greene}, \ref{period}, and \ref{fuselier}, we obtain
\begin{align}\label{eq-1.9}
	A_x={_3}\mathbb{F}_2\left[\begin{array}{ccc}
		\varphi, & \varphi, &  \varphi\vspace*{0.1cm}\\
		& \varepsilon, & \varepsilon
	\end{array}|4x
	\right]=\frac{p^2}{J(\varphi,\varepsilon)^2}\cdot {_3}F_2\left(\begin{array}{ccc}
		\varphi, & \varphi, &  \varphi\vspace*{0.1cm}\\
		& \varepsilon, & \varepsilon
	\end{array}|4x
	\right)_p.
\end{align}
Using \eqref{jacobi} and \eqref{eq-0.4}, we have $J(\varphi,\varepsilon)^2=1$. Now, \cite[Eqn (3-9)]{BS3} yields
	\begin{align}
	A_x={_3}G_3\left[\begin{array}{ccc}
		\frac{1}{2}, & \frac{1}{2}, & \frac{1}{2}\vspace*{0.1cm}\\
		0, & 0, & 0
	\end{array}|\frac{1}{4x}
	\right]_p.\label{eq-1.8}
\end{align}
Combining \eqref{eq-1.7}, \eqref{eq-1.9} and \eqref{eq-1.8}, we obtain the required identity.
\end{proof}
For $p\equiv 1\pmod 3$, Greene proved the finite field analogue of \eqref{kummar} in \cite[Theorem 4.4(i)]{greene}. We have $\varphi(-3)=-1$ when $p\equiv 2\pmod 3$ and  $\varphi(-1)=-1$ when $p\equiv 3\pmod 4$. Hence, using Theorems \ref{MT-1} and \ref{MT-2} for $t=\frac{1}{2}$, we obtain the following special values of $_2G_2[\cdots]_p$.
\begin{cor}
	Let $p>3$ be a prime.
	\begin{enumerate}
		\item For $p\equiv2\pmod3$, we have
		\begin{align*}
			{_2}G_2\left[\begin{array}{cc}
				\frac{1}{3},& \frac{2}{3} \vspace{.12cm}\\
				0, & 0
			\end{array}|2
			\right]_p=0.
		\end{align*}
		\item  For $p\equiv3\pmod{4}$, we have
		\begin{align*}
			{_2}G_2\left[\begin{array}{cc}
				\frac{1}{6}, & \frac{5}{6} \vspace{.12cm}\\
				0,& 0
			\end{array}|2
			\right]_p=0.
		\end{align*}
	\end{enumerate}
\end{cor}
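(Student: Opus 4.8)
The plan is to obtain both special values directly from the transformation identities already established in Theorems \ref{MT-1} and \ref{MT-2}, by making the single substitution $t=\frac{1}{2}$. The point is that $t=\frac12$ is the fixed point of the involution $t\mapsto 1-t$ governing the two sides of each identity, so that the two $_2G_2$-terms are forced to coincide and the identity degenerates into a relation of the form (value) $=$ (sign) $\times$ (value).

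First I would verify that $t=\frac12$ is admissible. In $\mathbb{F}_p$ we have $\frac12=\frac{p+1}{2}$, and since $p>3$ this element is neither $0$ nor $1$; hence the hypothesis $t\neq 0,1$ of both theorems is satisfied.

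For part (1), put $t=\frac12$ in Theorem \ref{MT-1}. Then $\frac1t=2$ and $\frac1{1-t}=2$, so the two hypergeometric terms are identical; calling their common value $G$, the identity collapses to $G=\varphi(-3)\,G$. For $p\equiv 2\pmod 3$ we have $\varphi(-3)=-1$ (as recorded immediately before the statement), so $G=-G$. Since $p$ is odd, $2$ is invertible, and therefore $G=0$, which is exactly the asserted value.

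Part (2) runs along identical lines: substituting $t=\frac12$ into Theorem \ref{MT-2} again makes both arguments equal to $2$, so the identity reads $H=\varphi(-1)\,H$ for the common value $H$. For $p\equiv 3\pmod 4$ we have $\varphi(-1)=-1$, whence $H=-H$ and so $H=0$. There is no genuine obstacle in this argument; the only point requiring a moment's care is the observation that $t=\frac12$ sends \emph{both} sides of each transformation to the same argument $2$, which is precisely why this value is the right one to substitute.
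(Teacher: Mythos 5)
Your proposal is correct and follows exactly the paper's own argument: the corollary is obtained by setting $t=\tfrac{1}{2}$ in Theorems \ref{MT-1} and \ref{MT-2}, so that both arguments become $2$ and the sign $\varphi(-3)=-1$ (for $p\equiv 2\pmod 3$), respectively $\varphi(-1)=-1$ (for $p\equiv 3\pmod 4$), forces the common value to vanish. Your additional checks (that $t=\tfrac12\neq 0,1$ in $\mathbb{F}_p$ and that $2$ is invertible since $p$ is odd) are fine and only make explicit what the paper leaves implicit.
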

Employing Theorem \ref{MT-3}, we find certain special values of ${_3}G_{3}[\cdots]_p$ as listed in the following corollary. For $t\in\mathbb{F}_p$, let
\begin{align*}
	\tilde{G}(t,p):={_3}G_3\left[\begin{array}{ccc}
		\frac{1}{2}, &\frac{1}{6}, & \frac{5}{6} \vspace{.12cm}\\
		0, & 0, & 0
	\end{array}|t
	\right]_p.
\end{align*}
\begin{cor}\label{cor2}
	Let $p>3$ be a prime. We have
	\begin{align*}
		&(i)~~	\tilde{G}\left(\frac{1331}{8},p\right)=\left\{
		\begin{array}{ll}
			\varphi(33)(4x^2-p), & \hbox{if $p\equiv1\pmod4,  ~~x^2+y^2=p,~ x$ is odd;} \\
			-p\varphi(33), & \hbox{if $p\equiv3\pmod4$.}
		\end{array}
		\right.\\
		&(ii)~~	\tilde{G}\left(\frac{125}{27},p\right)=\left\{
		\begin{array}{ll}
			\varphi(10)(4x^2-p), & \hbox{if $p\equiv1,3\pmod8, $ and $x^2+2y^2=p$;} \\
			-p\varphi(10), & \hbox{if $p\equiv 5,7\pmod8$.}
		\end{array}
		\right.\\
		&	(iii)~~	\tilde{G}\left(\frac{125}{4},p\right)=\left\{
		\begin{array}{ll}
			\varphi(5)(4x^2-p), & \hbox{if $p\equiv1\pmod3, $ and $x^2+3y^2=p$;} \\
			-p\varphi(5), & \hbox{if $p\equiv 2\pmod3$.}
		\end{array}
		\right.\\
		&(iv)~~	\tilde{G}\left(-\frac{125}{64},p\right)=\left\{
		\begin{array}{ll}
			\varphi(105)(4x^2-p), & \hbox{if $p\equiv1,2,4\pmod7, $ and $x^2+7y^2=p$;} \\
			-p\varphi(105), & \hbox{if $p\equiv 3,5,6\pmod7$.}
		\end{array}
		\right.\\
		&(v)~~	\tilde{G}\left(\frac{614125}{64},p\right)=\left\{
		\begin{array}{ll}
			\varphi(1785)(4x^2-p), & \hbox{if $p\equiv1,2,4\pmod7, $ and $x^2+7y^2=p$;} \\
			-p\varphi(1785), & \hbox{if $p\equiv 3,5,6\pmod7$.}
		\end{array}
		\right.
	\end{align*}
\end{cor}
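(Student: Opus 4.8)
The engine of the proof is Theorem \ref{MT-3}. Since $t\neq 0,1$ forces the relevant $x$ to satisfy $x\neq 1$, we have $\varphi(1-x)^2=1$, so rearranging the identity of Theorem \ref{MT-3} and multiplying through by $\varphi(1-x)$ gives
\begin{align*}
\tilde{G}\left(\frac{-4(x-1)^3}{27x^2},p\right)=\varphi(1-x)\left({_3}G_3\left[\begin{array}{ccc}\frac{1}{2}, & \frac{1}{2}, & \frac{1}{2}\\ 0, & 0, & 0\end{array}|\frac{1}{4x}\right]_p-\delta(x+2)\varphi(-1)p\right).
\end{align*}
Thus the plan is, for each of the five target arguments $t$, to exhibit a rational number $x=x_0$ (reduced mod $p$) with $\frac{-4(x_0-1)^3}{27x_0^2}=t$, and then to evaluate the right-hand side at $x=x_0$. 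One first records that $x=-2$ forces $t=1$; since none of the five arguments equals $1$, the solution $x_0$ satisfies $x_0\neq -2$ and the $\delta$-term drops out in every case.

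For the evaluation I would use the fact that ${_3}G_3[\frac12,\frac12,\frac12|\lambda]_p$ is governed by an elliptic curve. Via the finite-field Clausen-type mechanism already visible in the proof of Theorem \ref{MT-3} — the passage through ${_3}\mathbb{F}_2[\varphi,\varphi,\varphi;\varepsilon,\varepsilon|4x]$, whose character sum is the square of a ${_2}F_1$ up to a $p$-term — one obtains for the associated Legendre-type curve $E_\lambda$ that
\begin{align*}
{_3}G_3\left[\begin{array}{ccc}\frac{1}{2}, & \frac{1}{2}, & \frac{1}{2}\\ 0, & 0, & 0\end{array}|\lambda\right]_p=a_p(E_\lambda)^2-p.
\end{align*}
I would then compute, for each $t$, the $j$-invariant of $E_{1/(4x_0)}$ and recognise it as a singular modulus: the five arguments are engineered so that $E_{1/(4x_0)}$ is a quadratic twist of an elliptic curve with complex multiplication by the order of discriminant $-4,-8,-12,-28,-28$ respectively. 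Because quadratic twisting leaves $a_p^2$ unchanged by \eqref{twist}, it suffices to compute $a_p^2$ for the underlying CM curve $E_0$.

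The computation of $a_p(E_0)^2$ is classical. If $p$ splits in the relevant imaginary quadratic field — equivalently $p$ is represented by the principal form $x^2+y^2$, $x^2+2y^2$, $x^2+3y^2$, or $x^2+7y^2$, which happens exactly under the stated congruence conditions (i.e. $\left(\frac{-1}{p}\right),\left(\frac{-2}{p}\right),\left(\frac{-3}{p}\right),\left(\frac{-7}{p}\right)$ equal to $1$) — then $a_p(E_0)=\pm 2x$ (with $x$ the odd representative in the discriminant $-4$ case), so $a_p^2-p=4x^2-p$; otherwise $p$ is inert, $E_0$ is supersingular, $a_p(E_0)=0$, and $a_p^2-p=-p$. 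Substituting into the displayed formula for $\tilde{G}$ and reading off the surviving prefactor $\varphi(1-x_0)$ — which should reduce to $\varphi(33),\varphi(10),\varphi(5),\varphi(105),\varphi(1785)$ in the five cases — yields the asserted values.

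The main obstacle is twofold. First, one must pin down the correct curve $E_{1/(4x_0)}$ and verify its CM type, i.e. compute the $j$-invariant at each $\lambda=\frac{1}{4x_0}$ and match it against the singular moduli of discriminants $-4,-8,-12,-28$. Second, and more delicately, one must control all quadratic-character and sign conventions: the exact normalisation $a_p=\pm 2x$ together with the parity constraint in the discriminant $-4$ case, the identification of the principal form with its congruence description, and the precise bookkeeping of $\varphi$-factors coming from $\varphi(1-x_0)$ together with the chosen twist, so that the constants $\varphi(33),\dots,\varphi(1785)$ emerge exactly as stated.
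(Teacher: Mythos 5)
Your opening move coincides with the paper's entire proof: the paper takes $x=-\frac{1}{32},\,-\frac{1}{4},\,\frac{1}{16},\,16,\,\frac{1}{256}$ in Theorem \ref{MT-3} (these are the rational preimages $x_0$ you postulate but never exhibit) and then simply cites Ono's evaluations \cite[Theorem 6 (ii), (iii), (v), (vi), (vii)]{ono} of Greene's ${_3}F_2\left(\varphi,\varphi,\varphi;\varepsilon,\varepsilon\,|\,4x_0\right)_p$ at $4x_0=-\frac{1}{8},\,-1,\,\frac{1}{4},\,64,\,\frac{1}{64}$. Where you diverge is in proposing to reprove those evaluations via CM elliptic curves rather than cite them. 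That is legitimate in principle (it is essentially how Ono obtained Theorem 6), but your central displayed identity ${_3}G_3\left[\frac12,\frac12,\frac12\,|\,\lambda\right]_p=a_p(E_\lambda)^2-p$ cannot be correct as stated: the finite-field Clausen-type formula carries a quadratic-character prefactor depending on the argument, and since $a_p^2$ is invariant under quadratic twisting by \eqref{twist}, no choice of ``the associated Legendre-type curve'' can absorb that factor. The paper itself exhibits this argument-dependence: Ono's Theorem 6 (i) (argument $-8$, used in the proof of Theorem \ref{MT-6}) has no character factor, while 6 (iii) (argument $-1$, used in the proof of Theorem \ref{MT-4}) carries $\varphi(2)$.

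This omission breaks exactly the part of the statement that needs proving, namely the constants. In your scheme the final constant is $\varphi(1-x_0)$, which equals $\varphi(66),\varphi(5),\varphi(15),\varphi(-15),\varphi(255)$ in the five cases, not the asserted $\varphi(33),\varphi(10),\varphi(5),\varphi(105),\varphi(1785)$; the discrepancies $\varphi(2),\varphi(2),\varphi(3),\varphi(-7),\varphi(7)$ are precisely the character factors in Ono's evaluations, and they are not squares, so your values are off by a sign for infinitely many $p$. A self-contained contradiction: cases (iv) and (v) involve the same discriminant $-28$, hence the same $a_p^2-p$, so your formula forces the ratio of their constants to be $\varphi(-15)\varphi(255)=\varphi(-17)$, whereas the true ratio is $\varphi(105)\varphi(1785)=\varphi(17)$; concretely, at $p=11$ your scheme gives $\varphi(255)(4\cdot 4-11)=-5$ for case (v), while the correct value is $\varphi(1785)(4\cdot4-11)=+5$. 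Two further gaps: the rational solutions $x_0$ and the CM identifications are asserted, not produced; and your dismissal of the $\delta$-term compares rational numbers although Theorem \ref{MT-3} lives in $\mathbb{F}_p$ --- for example $-\frac{1}{32}\equiv-2\pmod 7$, so in case (i) with $p=7$ the $\delta$-term does not drop (a subtlety that the paper's own two-line proof also glosses over, and which in fact causes failures at such small primes). As written, the evaluation step --- the only part the paper does not outsource to a citation --- is both incomplete and incorrect; to repair it you would need a Clausen-type identity stated with its character factor, followed by the CM computations you defer.
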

\begin{proof}
	$(i)$ Taking $x=-\frac{1}{32}$ in Theorem \ref{MT-3} and then employing \cite[Theorem 6 (ii)]{ono}, we obtain the required values.\\
	$(ii)$ Taking $x=-\frac{1}{4}$ in Theorem \ref{MT-3} and then employing \cite[Theorem 6 (iii)]{ono}, we obtain the required values.\\
	$(iii)$ Taking $x=\frac{1}{16}$ in Theorem \ref{MT-3} and then employing \cite[Theorem 6 (v)]{ono}, we obtain the required values.\\	
	$(iv)$ Taking $x=16$ in Theorem \ref{MT-3} and then employing \cite[Theorem 6 (vi)]{ono}, we obtain the required values.\\	
	$(v)$ Taking $x=\frac{1}{256}$ in Theorem \ref{MT-3} and then employing \cite[Theorem 6 (vii)]{ono}, we obtain the required values.\\
\end{proof}
\section{Proof of Theorems \ref{MT-4}, \ref{MT-5}, and \ref{MT-6}} 
Before proving Theorems \ref{MT-4}, \ref{MT-5}, and \ref{MT-6}, we state a theorem of Beukers and Stienstra that describes the Fourier coefficients of the three modular forms given by \eqref{modular1}-\eqref{modular3}.
\begin{theorem}\emph{(\cite[14.2]{S-B})}.\label{thrm-3}
	If we define $\Phi_4(p):=a(p)$, $\Phi_3(p):=b(p)$, and $\Phi_2(p):=c(p)$, then the $p$-th Fourier coefficients of the modular forms are given by
	\begin{align*}
		\Phi_M(p)=\left\{ \begin{array}{ll}
			0 & \hbox{if $\left(\frac{-M}{p}\right)=-1;$}\\
			4a^2-2p & \hbox{if $\left(\frac{-M}{p}\right)=1,~~ p=a^2+Mb^2.$}
		\end{array}\right.
	\end{align*}
\end{theorem}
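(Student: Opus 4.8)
The three forms $\Phi_4=\eta^6(4z)$, $\Phi_3=\eta^3(6z)\eta^3(2z)$ and $\Phi_2=\eta^2(8z)\eta(4z)\eta(2z)\eta^2(z)$ are weight three newforms whose coefficient formula $4a^2-2p$ is quadratic in the representation $p=a^2+Mb^2$; this is the signature of a complex-multiplication form. The plan is therefore to realize each $\Phi_M$ as a Hecke theta series attached to the imaginary quadratic field $K_M=\mathbb{Q}(\sqrt{-M})$ and then read off the $p$-th coefficient from the splitting behaviour of $p$ in $K_M$. A convenient feature is that for $M\in\{2,3,4\}$ the field $K_M$ has class number one, so every ideal is principal and the only primes of norm $p$ are the principal ones generated by $\pi=a+b\sqrt{-M}$ with $p=a^2+Mb^2$.

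Concretely, I would fix on $K_M$ a Hecke Grossencharacter $\psi_M$ of infinity type $z\mapsto z^2$, with conductor $\mathfrak m_M$ chosen so that $|d_{K_M}|\cdot N(\mathfrak m_M)$ equals the stated level ($16$, $12$, $8$ for $M=4,3,2$) and so that the associated theta series $\theta_{\psi_M}:=\sum_{\mathfrak a}\psi_M(\mathfrak a)\,q^{N\mathfrak a}$ has nebentypus $\left(\frac{-M}{\cdot}\right)$. The finite part of $\psi_M$ must be compatible with the units of $K_M$, i.e. it must undo the factor $u^2$ coming from a change of generator $\pi\mapsto u\pi$; for $M=2$ the units are $\pm1$ and $\psi_2$ is unramified, while for $M=3,4$ one needs a genuinely ramified finite character. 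By the classical theory of CM modular forms (Hecke, Shimura), $\theta_{\psi_M}$ is a newform in $S_3(\Gamma_0(N),\left(\frac{-M}{\cdot}\right))$. I would then identify $\theta_{\psi_M}=\Phi_M$ by comparing $q$-expansions: both lie in the same finite-dimensional space, so agreement up to the Sturm bound forces equality, and this comparison simultaneously pins down the correct sign in the coefficient formula.

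With the identification in hand, the coefficient extraction is immediate. If $\left(\frac{-M}{p}\right)=-1$ then $p$ is inert in $K_M$, there is no ideal of norm exactly $p$, and the coefficient of $q^p$ in $\theta_{\psi_M}$ vanishes, giving $\Phi_M(p)=0$. If $\left(\frac{-M}{p}\right)=1$ then $p=\mathfrak p\overline{\mathfrak p}$ splits, $\mathfrak p=(\pi)$ with $\pi=a+b\sqrt{-M}$ and $p=a^2+Mb^2$, and the two ideals of norm $p$ contribute
\[
\Phi_M(p)=\psi_M(\mathfrak p)+\psi_M(\overline{\mathfrak p})=\pi^2+\overline{\pi}^2=2(a^2-Mb^2)=4a^2-2p,
\]
the last equality using $Mb^2=p-a^2$; here the generator $\pi$ is taken in the residue class singled out by the finite part of $\psi_M$, so that $\psi_M(\mathfrak p)=\pi^2$ exactly and $\psi_M(\overline{\mathfrak p})=\overline{\psi_M(\mathfrak p)}$. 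Since only $a^2$ (the square with coefficient $1$ in $p=a^2+Mb^2$) enters, the right-hand side is well defined.

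The main obstacle is the bookkeeping in the construction of $\psi_M$: one must choose the conductor $\mathfrak m_M$ and the finite character so that the level, the nebentypus $\left(\frac{-M}{\cdot}\right)$, and crucially the $+$ sign (rather than $-$) in $4a^2-2p$ all come out correctly and uniformly in $p$. The case $M=2$ is clean because $\psi_2$ is unramified and $\psi_2(\mathfrak p)=\pi^2$ with no unit ambiguity; the ramified cases $M=3,4$ require verifying that the finite character is trivial on the chosen generator, which is exactly the step that fixes the sign. Everything else—the holomorphy and modularity of $\theta_{\psi_M}$ and the finite Fourier-coefficient comparison with the eta quotient—is standard once the character is correctly specified.
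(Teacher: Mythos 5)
The paper gives no proof of this statement: Theorem \ref{thrm-3} is imported wholesale from Stienstra--Beukers \cite[14.2]{S-B}, so there is no internal argument to compare yours against. Judged on its own, your CM theta-series outline is correct and is essentially the classical route (and in substance the one in the cited source, where these eta quotients are identified with theta series attached to Hecke characters of $\mathbb{Q}(i)$, $\mathbb{Q}(\sqrt{-3})$, $\mathbb{Q}(\sqrt{-2})$). The bookkeeping you flag as the main obstacle does work out: for $M=4$ take conductor $\mathfrak{m}=(2)$ in $\mathbb{Z}[i]$ with $\chi(i)=-1$; a split $p$ has a generator $\pi=a+2bi$ with $a$ odd, so $\pi\equiv 1\pmod{2}$ and $\chi(\pi)=1$. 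For $M=3$ take $\mathfrak{m}=(2)$ in the Eisenstein integers with $\chi$ the cubic character sending a primitive sixth root of unity appropriately; since $\sqrt{-3}\equiv 1\pmod 2$ and $a+b$ is odd when $p=a^2+3b^2$, the generator $a+b\sqrt{-3}$ again has $\chi(\pi)=1$. For $M=2$ the unramified character suffices. In each case $\chi$ is trivial on odd rational integers, so the level $|d_K|N(\mathfrak{m})$ and nebentypus match \eqref{modular1}--\eqref{modular3}. The one caveat is that, as written, your text is a program rather than a proof: the newform property of $\theta_{\psi_M}$ and the Sturm-bound comparison with the eta products are asserted, not carried out. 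Both are standard and finite, so the proposal is sound modulo that routine verification.
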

Now, we are ready to prove the remaining results.
\begin{proof}[Proof of Theorem \ref{MT-4}]
	Using Theorem \ref{thrm-1} with $x=-1$, we obtain
	\begin{align*}
		{_3}G_3\left[\begin{array}{ccc}
			\frac{1}{2},& \frac{1}{2}, &\frac{1}{2} \vspace{.12cm}\\
			0,& 0, & 0
		\end{array}|-1 \right]_p &= s(p)\cdot{_3}G_3\left[\begin{array}{ccc}
			\frac{1}{2}, &\frac{1}{4}, & \frac{3}{4} \vspace{.12cm}\\
			0, & 0, & 0
		\end{array}|1
		\right]_p  + p\cdot\varphi(-1).
	\end{align*}
It is easy to check that $s(p)=\varphi(2)$. Therefore, we have
\begin{align}\label{eqn-4.0}
	{_3}G_3\left[\begin{array}{ccc}
			\frac{1}{2}, &\frac{1}{4}, & \frac{3}{4} \vspace{.12cm}\\
			0, & 0, & 0
		\end{array}|1
		\right]_p  &=	\varphi(2)\cdot {_3}G_3\left[\begin{array}{ccc}
			\frac{1}{2}, & \frac{1}{2},&\frac{1}{2} \vspace{.12cm}\\
			0, & 0, & 0
		\end{array}|-1 \right]_p - p\cdot\varphi(-2)\notag\\
	&=\varphi(2)\cdot p^2\cdot {_3}F_2\left(\begin{array}{ccc}
		\varphi,& \varphi, &\varphi \vspace{.12cm}\\
	 & \varepsilon, & \varepsilon
	\end{array}|-1 \right)_p - p\cdot\varphi(-2).
\end{align}
From \cite[Theorem 6 (iii)]{ono}, we obtain
\begin{align}\label{eqn-4.1}
{_3}F_2\left(\begin{array}{ccc}
\varphi,& \varphi, &\varphi \vspace{.12cm}\\
& \varepsilon, & \varepsilon
\end{array}|-1 \right)_p=\left\{
\begin{array}{ll}
-\frac{\varphi(2)}{p}, & \hbox{if $p\equiv 5,7\pmod8$;} \\
\frac{\varphi(2)(4x^2-p)}{p^2}, & \hbox{if $p\equiv1,3\pmod8 $ and $x^2+2y^2=p$.}
\end{array}
\right.
\end{align}
Combining \eqref{eqn-4.0}, \eqref{eqn-4.1}, and Theorem \ref{thrm-3} with the fact that $\varphi(-2)=1$ if $p\equiv1,3\pmod8$ and $\varphi(-2)=-1$ if $p\equiv5,7\pmod8$, we obtain the required result.
\end{proof}
\begin{proof}[Proof of Theorem \ref{MT-5}]
	Let $p\equiv1\pmod3$. From \cite[Proposition 2.2]{mccarthy1}, we have
	\begin{align*}
		{_3}G_3\left[\begin{array}{ccc}
			\frac{1}{2}, &\frac{1}{3}, & \frac{2}{3} \vspace{.12cm}\\
			0, & 0,& 0
		\end{array}|1
		\right]_p=p^2\cdot {_3}F_2\left(\begin{array}{ccc}
			\varphi, &\chi_3, & \chi_3^2 \vspace{.12cm}\\
			 &\varepsilon,& \varepsilon
		\end{array}|1
		\right)_p,
	\end{align*}
	where $\chi_3$ is a character on $\mathbb{F}_p$ of order $3$. Employing \cite[Proposition 4.2]{mortenson} with $d=3$, we obtain the desired result. Next, we prove the result for $p\equiv 2\pmod3$. Consider
	\begin{align*}
		A&:={_3}G_3\left[\begin{array}{ccc}
			\frac{1}{2}, &\frac{1}{3}, & \frac{2}{3} \vspace{.12cm}\\
			0, & 0,& 0
		\end{array}|1
		\right]_p\\
		&=-\frac{1}{p-1}-\frac{1}{p-1}\sum_{a=1}^{p-2}(-1)^a(-p)^{-\left\lfloor\frac{1}{3}-\frac{a}{p-1}\right\rfloor-\left\lfloor\frac{2}{3}-\frac{a}{p-1}\right\rfloor-\left\lfloor\frac{1}{2}-\frac{a}{p-1}\right\rfloor}\notag\\
		&\hspace*{0.5cm}\times\frac{\Gamma_p\left(\frac{a}{p-1}\right)^3\Gamma_p\left(\left\langle\frac{1}{3}-\frac{a}{p-1}\right\rangle\right)\Gamma_p\left(\left\langle\frac{2}{3}-\frac{a}{p-1}\right\rangle\right)\Gamma_p\left(\left\langle\frac{1}{2}-\frac{a}{p-1}\right\rangle\right)}{\Gamma_p\left(\frac{1}{3}\right)\Gamma_p\left(\frac{2}{3}\right)\Gamma_p\left(\frac{1}{2}\right)}.
	\end{align*}
Using the fact that $\overline{\omega}^a(-1)=(-1)^a$ and Lemma \ref{lemma-0.1}, we obtain
\begin{align*}
	A&=-\frac{1}{p-1}+\frac{1}{p-1}\sum_{t=2}^{p-1}\varphi(t(t-1))\sum_{a=1}^{p-2}\overline{\omega}^a\left(\frac{1}{t}\right)(-p)^{-\left\lfloor\frac{1}{3}-\frac{a}{p-1}\right\rfloor-\left\lfloor\frac{2}{3}-\frac{a}{p-1}\right\rfloor}\\
	&\hspace*{0.5cm}\times\frac{\Gamma_p\left(\frac{a}{p-1}\right)^2\Gamma_p\left(\left\langle\frac{1}{3}-\frac{a}{p-1}\right\rangle\right)\Gamma_p\left(\left\langle\frac{2}{3}-\frac{a}{p-1}\right\rangle\right)}{\Gamma_p\left(\frac{1}{3}\right)\Gamma_p\left(\frac{2}{3}\right)}.
\end{align*}
Adding and subtracting the term under the summation for $a=0$, we have
\begin{align*}
	A&=-\frac{1}{p-1}+\frac{1}{p-1}\sum_{t=2}^{p-1}\varphi(t(t-1))\sum_{a=0}^{p-2}\overline{\omega}^a\left(\frac{1}{t}\right)(-p)^{-\left\lfloor\frac{1}{3}-\frac{a}{p-1}\right\rfloor-\left\lfloor\frac{2}{3}-\frac{a}{p-1}\right\rfloor}\\
	&\hspace*{0.5cm}\times\frac{\Gamma_p\left(\frac{a}{p-1}\right)^2\Gamma_p\left(\left\langle\frac{1}{3}-\frac{a}{p-1}\right\rangle\right)\Gamma_p\left(\left\langle\frac{2}{3}-\frac{a}{p-1}\right\rangle\right)}{\Gamma_p\left(\frac{1}{3}\right)\Gamma_p\left(\frac{2}{3}\right)}-\frac{1}{p-1}\sum_{t=2}^{p-1}\varphi(t(t-1))\\
	&=-\sum_{t=2}^{p-1}\varphi(t(t-1))\cdot{_2}G_2\left[\begin{array}{cc}
		\frac{1}{3}, &  \frac{2}{3}\vspace*{0.1cm}\\
		0, & 0
	\end{array}|\frac{1}{t}
	\right]_p,
\end{align*}
where the last equality is obtained by using the fact that $\sum_{t=2}^{p-1}\varphi(t(t-1))=-1$. Employing Theorem \ref{MT-1} with the fact that $\varphi(-3)=-1$ for $p\equiv2\pmod3$, we obtain
\begin{align*}
	A=\sum_{t=2}^{p-1}\varphi(t(t-1))\cdot{_2}G_2\left[\begin{array}{cc}
		\frac{1}{3}, &  \frac{2}{3}\vspace*{0.1cm}\\
		0, & 0
	\end{array}|\frac{1}{1-t}
	\right]_p.
\end{align*}
Taking $t\mapsto 1-t$, we obtain
\begin{align*}
	A&=\sum_{t=2}^{p-1}\varphi(t(t-1))\cdot{_2}G_2\left[\begin{array}{cc}
		\frac{1}{3}, &  \frac{2}{3}\vspace*{0.1cm}\\
		0, & 0
	\end{array}|\frac{1}{t}
	\right]_p=-A.
\end{align*}
This yields $2A=0$ and hence $A=0$. Using Theorem \ref{thrm-3}, we complete the proof of the theorem.
\end{proof}
\begin{proof}[Proof of Theorem \ref{MT-6}]
Substituting $x=-2$ in Theorem \ref{MT-3}, we obtain
	\begin{align}\label{eqn-4.2}
		p^2\cdot {_3}F_2\left(\begin{array}{ccc}
			\varphi, & \varphi, &  \varphi\vspace*{0.1cm}\\
			& \varepsilon, & \varepsilon
		\end{array}|-8
		\right)_p=\varphi(3)\cdot{_3}G_3\left[\begin{array}{ccc}
			\frac{1}{2}, & \frac{1}{6}, & \frac{5}{6}\vspace*{0.1cm}\\
			0, & 0, & 0
		\end{array}|1
		\right]_p+\varphi(-1)p.
	\end{align}
From \cite[Theorem 6 (i)]{ono}, we obtain
\begin{align}\label{eqn-4.3}
{_3}F_2\left(\begin{array}{ccc}
\varphi, & \varphi, &  \varphi\vspace*{0.1cm}\\
& \varepsilon, & \varepsilon
\end{array}|-8
\right)_p=\left\{
\begin{array}{ll}
-\frac{1}{p}, & \hbox{if $p\equiv3\pmod4$;} \\
\frac{4x^2-p}{p^2}, & \hbox{if $p\equiv1\pmod4,  ~~x^2+y^2=p,~ x$ is odd.}
\end{array}
\right.
\end{align}
Combining \eqref{eqn-4.2}, \eqref{eqn-4.3}, and Theorem \ref{thrm-3} with the fact that $\varphi(3)=-1$ if $p\equiv5,7\pmod{12}$ and $\varphi(3)=1$ if $p\equiv1,11\pmod{12}$, we complete the proof of the theorem.
\end{proof}
We now prove Corollary \ref{main_thrm}.
\begin{proof}[Proof of Corollary  \ref{main_thrm}] We have 
	\begin{align*}
		{_3}F_2\left[\begin{array}{ccc}
			\frac{1}{2},&\frac{1}{3}, & \frac{2}{3} \vspace{.12cm}\\
			& 1, & 1
		\end{array}|1
		\right]_{p-1}=\sum_{n=0}^{p-1}\frac{(3n)!(2n)!}{n!^5}108^{-n}.
		\end{align*}
Now, Theorem \ref{thrm-2} with $d=3$ and Theorem \ref{MT-5} readily yields \eqref{eq-cor-1}.\\
Similarly, we have 
\begin{align*}
	{_3}F_2\left[\begin{array}{ccc}
		\frac{1}{2}, &\frac{1}{4}, & \frac{3}{4} \vspace{.12cm}\\
		& 1, & 1
	\end{array}|1
	\right]_{p-1}=\sum_{n=0}^{p-1}\frac{(4n)!}{n!^4}256^{-n}.
\end{align*}
Combining Theorem \ref{thrm-2} with $d=4$ and Theorem \ref{MT-4}, we obtain \eqref{eq-cor-2}.\\
Finally, we have
\begin{align*}
	{_3}F_2\left[\begin{array}{ccc}
	\frac{1}{2}, &\frac{1}{6}, & \frac{5}{6} \vspace{.12cm}\\
	& 1, & 1
\end{array}|1
\right]_{p-1}=\sum_{n=0}^{p-1}\frac{(6n)!}{(3n)!n!^3}1728^{-n}.
\end{align*} 
Combining Theorem \ref{thrm-2} with $d=6$ and Theorem \ref{MT-6}, we obtain \eqref{eq-cor-3}.
\end{proof}
\section{acknowledgement} We thank John Cremona for his help in the proof of Proposition 3.1. We also thank Zhi-Wei Sun for sharing his work on Rodriguez-Villegas conjectures.

\end{document}